\newcommand{\R}{\mathbb{R}}
\newcommand{\N}{\mathbb{N}}
\newcommand{\vV}{\textbf{V}}
\newcommand{\vu}{\textbf{u}}
\newcommand{\vut}{u_\tau}
\newcommand{\vv}{\textbf{v}}
\newcommand{\vw}{\textbf{w}}
\newcommand{\vf}{\textbf{f}}
\newcommand{\vO}{\textbf{0}}
\newcommand{\vn}{\textbf{n}}
\newcommand{\pol}{\mathcal{P}}
\newcommand{\bpol}{\pol \!\!\!\!\!\!\!\!\!\!\pol \!\!\!\!\!\!\!\!\!\!\pol}
\newcommand{\bpolt}{\bpol_{\tau}}
\newcommand{\bpoln}{\bpol_{\vn}}
\newcommand{\cle}{ \preccurlyeq }
\newcommand{\cge}{ \succcurlyeq }
\newcommand{\laplace}{\Delta}
\newcommand{\Tc}{\mathcal{T}}
\newcommand{\Fc}{\mathcal{F}}
\newcommand{\jump}[1]{ {[\![} #1 {]\!]} }
\newcommand{\mean}[1]{ {\{\!\!\{} #1 {\}\!\!\}} }
\newcommand{\dgnorm}[1]{\| #1 \|_{1_h}}
\newcommand{\lnorm}[1]{\| #1 \|_{0}}
\newcommand{\lnormel}[1]{\| #1 \|_{0,T}}
\newcommand{\lnormbel}[1]{\| #1 \|_{0,E}}
\newcommand{\intd}{~\mathrm{d}}
\newcommand{\ext}{\mathcal{E}}
\newcommand{\exttang}{\mathcal{E}^{\tau}}
\newcommand{\extnorm}{\mathcal{E}^{\vn}}
\newcommand{\exttangc}{\mathcal{E}^{\tau,c}}
\newcommand{\Tref}{\widehat{T}}
\newcommand{\pop}{\mathcal{Z}}
\newcommand{\cov}{\mathcal{C}}
\newcommand{\intjacobi}{\widehat{p}}
\let\div\undefined\DeclareMathOperator{\div}{div}
\let\curl\undefined\DeclareMathOperator{\curl}{curl}
\newcommand{\projq}{\Pi^{Q_h}}
\newcommand{\fortin}{\Pi_{F}}
\begin{document}

\title{Polynomial robust stability analysis for $H(\textrm{div})$-conforming finite elements for the Stokes equations}
% Short title for running heads:
\shorttitle{Polynomial robust stability analysis}

\author{%
{\sc
Philip L. Lederer\thanks{Corresponding author. Email: philip.lederer@tuwien.ac.at}
and
Joachim Sch\"oberl} \\[2pt]
Institute for Analysis an Scientific Computing, TU Wien, Austria
}
% Short list of authors for running heads:
\shortauthorlist{P. Lederer and J. Sch\"oberl}

\maketitle

\begin{abstract}
  % Body of abstract:
  {In this work we consider a discontinuous Galerkin method for the discretization of the Stokes problem. We use $H(\textrm{div})$-conforming finite elements as they provide major benefits such as exact mass conservation and pressure-independent error estimates. The main aspect of this work lies in the analysis of high order approximations. We show that the considered method is uniformly stable with respect to the polynomial order $k$ and provides optimal error estimates $\dgnorm{\vu - \vu_h} + \lnorm{\projq p-p_h} \le c \left( h/k \right)^s  \| \vu \|_{s+1} $. To derive those estimates, we prove a $k$-robust LBB condition. This proof is based on a polynomial $H^2$-stable extension operator. This extension operator itself is of interest for the numerical analysis of $C^0$-continuous discontinuous Galerkin methods for $4^{th}$ order problems.}
%{In a high order finite element method the constants shall be robust with respect to the polynomial order $k$. $H(\textrm{div})$-conforming discontinuous methods are modern tools for the discretization of the (Navier--)Stokes equation as they provide major benefits as exact mass conservation or pressure-robust error estimates. In this work we show that also a  high-order implementation can be considered, i. e. they are stable with respect to $k$ and leads to optimal error estimates $\dgnorm{\vu - \vu_h} + \lnorm{\projq p-p_h} \le c_{err} \left( h/p \right)^s  \| \vu \|_{s+1} $. For this we introduce an $H^2$-stable polynomial extension that comes along with the normal continuity of the $H(\textrm{div})$-conforming method. Beside the polynomial robustness of the discretization the existence of such an extension is the main contribution of this paper.}
% Keywords:
{Navier Stokes equations, mixed finite element methods, discontinuous Galerkin methods, high order methods}
\end{abstract}

\section{Introduction}
\label{sec:introduction}In this paper we consider the numerical solution of the Stokes equations on a bounded domain $\Omega \subset \R^2$, 
\begin{align} \label{int::stokesequations}
  \begin{array}{rcll}
    -\nu \Delta \vu + \nabla p &=& \vf &\textrm{in } \Omega \\
    \div \vu & = & 0 & \textrm{in } \Omega,                 
  \end{array}
\end{align}
with boundary conditions $\vu = \vu_D$ on $\partial \Omega$, where $\nu =const$ is the kinematic viscosity, $\vu$ is the velocity field, $p$ is the pressure and $\vf$ are external forces. The approximation of the (Navier-) Stokes problem is well analysed and many different finite element methods were introduced, see for example \cite{opac-b1081719, opac-b1119398, huerta, glowinski}. Furthermore discontinuous Galerkin (DG) finite element methods for elliptic problems got popular, see for example \cite{MR1885715, MR1897953, MR2431403},  and thus also for flow problems as in \cite{MR1938957,MR1974180, MR2085402,MR2136994, MR2031395, Cockburn2007,Cockburn:2002}. In this paper we consider an $H(\div)$-conforming method introduced in \cite{Cockburn2007} due to different advantages as local conservation, the possibility to use an upwinding scheme for convection dominated flows and pressure robust (independent) error estimates due to exact divergence free velocity test functions, see \cite{MR3133522, MR3326010, MR3460110, MR3481034}. To reduce the computational costs of DG methods, we also want to mention Hybrid DG (HDG) methods where new variables are introduced on the skeleton and a static condensation technique is used for the element unknowns, see \cite{MR2727822,MR2679797,MR2772094,MR2753354,MR2796169, MR3047948} and for $H(\div)$-conforming methods \cite{Lehrenfeld:thesis, guosheng,MR3511719}. \newline
The method we use is well analysed with respect to mesh refinement and provides optimal error estimates with respect to the mesh-size $h$. The main contribution of this paper is to show that the method is also uniformly stable with respect to the polynomial order $k$. For this we prove that the constant $\beta$ for the LBB condition 
\begin{align*}
  \sup_{\vO \neq \vv_h \in \vV_h} \frac{b(\vv_h,q_h)}{\| \vv_h \|_{\vV_h}} \geq \beta \| q_h \|_{Q_h} \quad \forall q_h \in Q_h,
\end{align*}
is independent of the order $k$. Together with standard continuity and ellipticity estimates this leads to a stable high order method. Note that with small adaptions of our results the polynomial robustness follows also for HDG methods as mentioned above.
High order methods for incompressible flow problems are of theoretical and practical importance. In \cite{opac-b1125338, Bernardi:1997:SMH} they consider a spectral method on the unit cube using polynomials of order $k$ and $k-2$ for the velocity and the pressure respectively. The resulting method leads to $\beta(k) = \mathcal{O}(k^{-\frac{d-1}{2}})$, where $d$ is the space dimension. The same method on triangles is discussed in \cite{opac-b1127030} with similar results. Furthermore the bad influence of a dependency of $k$ of the LBB constant for an iterative method for solving the Navier--Stokes equation was analysed. Understanding the problem, an improvement was achieved in \cite{MR1686546}. They used polynomials of partial order $k$ for the velocity and polynomials of total order at most $k-1$ for the pressure resulting in a uniformly stable method. Similar achievements for $hp$ mixed finite elements methods are accomplished in \cite{Stenberg1996}. Therein, different combinations of elements on quadrilaterals like continuous polynomials of order $k$ for the velocity and discontinuous polynomials of order $k-2$ for the pressure are discussed and an exact analysis is presented but again revealed a dependency on $k$. They also considered different tensor product spaces for each component of the velocity. A similar approach leading to an optimal exact divergence-free method was presented in \cite{MR2519595} using polynomials of order $k+1$ in the $x$ direction and polynomials of order $k$ in the $y$ direction for the velocity in $x$ direction and vice versa for the velocity in $y$ direction. Using proper degrees of freedom, this leads to a similar method on quadrilaterals as we use on triangles. Another approach, combining the tensor product structure on quadrilaterals and the advantage of approximating more complex geometries using triangles is analyzed in \cite{MR3518368}. The key of this method is to use the Duffy transformation and a proper pair of approximation spaces which leads to $\beta(k) = \mathcal{O}(k^{-\frac{1}{2}})$  with the drawback of using rational functions for the approximation.  We also want to mention the method considered in \cite{MR1897411} where a uniformly stable approximation using a continuous ansatz  for the velocity and pressure is presented which is adapted from the ideas of \cite{MR1686546} but enriches the pressure space to overcome the lack of convergence order that would appear using just a continuous version of this method. Considering continuous approximations, also the famous Taylor-Hood elements on triangles and quadrilaterals, see \cite{brezzifortin} and \cite{brezzi:falk} have to be discussed. Although these methods were shown to be stable with respect to the mesh size $h$, numerical evidence  predict that it is not uniformly stable with respect to $k$.
Of course high order methods were also used for discontinuous finite element methods. We want to mention the work of \cite{MR1938957} and \cite{MR1974180} where an analysis for $hp$-DG methods on quadrilaterals is presented but  revealed a dependency on the order $k$, and also the work of \cite{MR3047948} where an HDG method on triangles and quadrilaterals with similar results is introduced.
% Considering flow problems, another major topic is the appearance of very thin boundary layers thus one may uses high ratio quadrilateral elements. In \cite{MR1813253,MR1634866,MR1728222} an analysis is presented which shows, that this can be done in some methods without loss of stability. Our results show that this holds also true for the method considered in this paper.
\newline \newline
The rest of the paper is structured in the following way. In chapter 2 we present the Stokes equation and the considered discretization method. Furthermore we present a short proof of the continuous divergence stability to motivate the existence of an $H^2$ stable polynomial extension operator which is used to prove the main theorem in chapter 3. In chapter 4 we take a look at some numerical examples and finally present the construction of an $H^2$ stable polynomial extension operator in chapter 5. 
\subsection{Preliminaries}
We assume an open bounded domain $\Omega \subset \R^2$ with a Lipschitz boundary $\Gamma$, thus for every point on the boundary there exists a Lipschitz-continuous mapping $\Phi^x$. If this mapping is furthermore differentiable up to order $m$ we say $\Gamma \in \mathcal{C}^{m,1}$. On $\Omega$ we define a shape regular triangulation $\mathcal{T}$  consisting of triangles $T$. Furthermore we assume $\mathcal{T}$ to be quasi--uniform thus, there exists one global mesh-size $h$ such that $h \approx \textrm{diam}(T),  \forall T \in \Tc$. The set of of edges, with respect to the triangulation $\mathcal{T}$, will be defined as $\mathcal{F}$. We call $\Tref := \{(x,y): 0\le x \le 1, 0 \le y \le 1, x+y \le 1 \}$ the reference element with the edges $E_1:= \{(x,0): 0\le x \le 1  \}$, $E_2:= \{(x,1-x): 0\le x \le 1  \}$ and $E_3:= \{(0,y): 0\le y \le 1  \}$, and define the interval $E:=\{(x,0): -1\le x \le 1  \}$. On all triangles we use $\vn$ and $\tau$ as symbols for the normal and tangential vector. For all subsets $\omega \subseteq\Omega$ with $\gamma := \partial \omega$ we have the space $L^2(\omega)$ with the norm $\| \cdot \|_{0,\omega}$ and the Sobolev spaces $H^1(\omega)$, $H^2(\omega), H^{1/2}(\gamma)$ with the corresponding Sobolev norms $ \| \cdot \|_{s,\omega}$. For a better readability we leave out the index $\omega$ if it is clear on which domain the norm is taken. 
%\begin{align*}
%\| \cdot \|_{L^2(\Omega)} := \| \cdot \|_{0} \quad \| \cdot \|_{H^1(\Omega)} := \| \cdot \|_{1} \quad \| \cdot \|_{H^2(\Omega)} := \| \cdot \|_{2} \quad \| \cdot \|_{H^{1/2}(\Gamma)} := \| \cdot \|_{1/2}.
%\end{align*}
%For all subsets $\omega \subset \Omega$ we use a likewise definition $\| \cdot \|_{L^2(\omega)} := \| \cdot \|_{0,\omega}$ and similar for the other norms.
On the edge $E_1$ we define the weighted $L^2$ and $H^{1/2}$ norm of a function $u$ as
\begin{align*}
\| u \|_{0^{*},E_1}^2 := \int_0^1 \left(\frac{1}{x} + \frac{1}{1-x}\right) u(x)^2 \intd s  \quad \textrm{and} \quad \| u \|_{1/2^*,E _1}^2 := | u |^2_{1/2,E _1} + \| u \|_{0^{*},E_1}^2.
\end{align*}
Furthermore we use the closed sub spaces
\begin{align*}
  L^2_0(\Omega) &:= \{ q \in L^2(\Omega): \int_\Omega q \intd x = 0\} \quad \textrm{and} \quad  H^1_0(\Omega) := \{ u \in H^1(\Omega): \mathrm{tr} ~ u = 0 ~\textrm{on}~ \partial \Omega\},
\end{align*}
and the polynomial spaces
\begin{align*}
  \pol^m(\Tc) &:= \{ v : v|_T \in \pol^m(T)~ \forall T \in \mathcal{T} \} = \prod_{ T \in \mathcal{T}} \pol^m(T) \quad \textrm{and} \quad   \bpol^m(\Tc) := [\pol^m(\Tc)]^2, \\
  \pol_{00}^m(E_1) &:= \{v \in \pol^m(E_1): v(0) = v'(0) = v(1) = v'(1) = 0\}.
\end{align*}
Also we define the following subspaces of vectorial polynomials on the reference triangle
\begin{align*}
 \bpol^m_{\tau}(\Tref):= \{\vv \in  \bpol^m(\Tref): \int_{\partial \Tref} \vv \cdot \tau = 0\} \quad \textrm{and} \quad \bpol^m_{\vn}(\Tref):= \{\vv \in  \bpol^m(\Tref): \int_{\partial \Tref} \vv \cdot \vn = 0\}.
\end{align*}
In this work we use an index notation for partial derivations, thus for an arbitrary function $u$ we write $u_{,x} := \frac{\partial u}{\partial x}$ and $u_{,y} := \frac{\partial u}{\partial y}$ and in a similar way second order derivations. We write $(x,y)^t$ as the transposed vector of $(x,y)$ and use $\perp$ as symbol for a counter clockwise  rotation by $\pi/2$, thus $(x,y)^\perp := (-y,x)$. Finally note that we use  $a \cle b$ when there exists a constant $c$ independent of $a, b$, the polynomial order and the mesh-size such that $a \le c b$.
%%% Local Variables: 
%%% mode:latex
%%% TeX-master: "main"
%%% End:

\section{Discretization of the Stokes problem}
\label{sec:discretization}In this chapter we present the discretization of the stationary incompressible Stokes equations \ref{int::stokesequations} from \cite{Cockburn2007}, thus we use mixed order finite element spaces with the polynomial orders $k$ and $k-1$ for the velocity and the pressure respectively. To assure a local conservative and energy-stable method, we provide exact divergence-free velocity fields by using an $H(\div{})$-conforming method, thus every discrete velocity field $\vu_h$ is in
\begin{align*}
H(\div{})(\Omega) := \{ \vu \in [L^2(\Omega)]^2: \div \vu \in L^2(\Omega) \}.
\end{align*}
To ensure $\vu_h \in H(\div{})(\Omega)$ we demand normal continuity across each edge resulting in the approximation space for the velocity
\begin{align*}
  \vV_h := \{ \vu_h \in \bpol^k(T): \jump{\vu_h \cdot n} = 0 ~ \forall E \in \Fc \} \subset  H(\div{},\Omega),
\end{align*}
where $\jump{\cdot}$ is the usual jump operator.
For the pressure space we assume no continuity across edges 
\begin{align*}
  Q_h :=\prod\limits_{T \in \Tc}\pol^{k-1}(T) \cap L^2_0(\Omega).
\end{align*}
Note, that this pair of finite element spaces fulfills $\div \vV_h = Q_h$ and thus, a weakly incompressible velocity field $\vu_h \in \vV_h$ is also exact divergence free
\begin{align*}
\int_\Omega \div{\vu_h} ~ q \intd x= 0 \quad \forall q \in Q_h \quad \Rightarrow \quad  \div{\vu_h} = 0 \quad \textrm{in } \Omega.
\end{align*}
Furthermore, using $\mean{\cdot}$ as symbol for the mean value on an edge $E \in \Fc$  we define the bilinear-form 
\begin{align} \label{bilinearform::a}
  a(\vu_h,\vv_h) :=  \sum\limits_{T \in \Tc} \int_T \nu \nabla \vu_h : \nabla \vv_h \intd x &- \sum\limits_{E \in \Fc} \int_{E} \nu \mean{\nabla \vu_h \cdot n} \jump{\vv_h \cdot \tau} \intd s \\
  &- \sum\limits_{E \in \Fc} \int_{E} \nu \mean{\nabla \vv_h \cdot n}  \jump{\vu_h \cdot \tau} \intd s + \sum\limits_{E \in \Fc} \int_{E} \nu \frac{\alpha k^2}{h} \jump{\vu_h \cdot \tau}  \jump{\vv_h \cdot \tau} \intd s \nonumber
\end{align}
where $\alpha >0$ and $k^2/h$ is the stability coefficient similar to for example in \cite{MR3047948}, and the bilinear-form and linear-form 
\begin{align} \label{bilinearform::b}
b(\vu_h, q_h) := \sum\limits_{T \in \Tc} \int_T \div{\vu_h} ~ q_h \intd x \quad \textrm{and} \quad l(\vv_h):= \sum\limits_{T \in \Tc} \int_T \vf \cdot \vv_h \intd x.
\end{align}
The discrete Stokes problem now reads as: Find $(\vu_h, p_h)$ in $\vV_h\times Q_h$ such that
\begin{align} \label{dis::stokesproblem}
  \begin{array}{rcll}
      a(\vu_h,\vv_h) + b(\vv_h, p_h) &=& l(\vv_h)\quad &\forall \vv_h  \in  \vV_h \\
    b(\vu_h, q_h) & = & 0  \quad & \forall q_h  \in  Q_h.              
  \end{array}
\end{align}
On the spaces $Q_h$ and $\vV_h$ we use the $L^2$-norm $\lnorm{\cdot}$ and $\dgnorm{\vv_h}^2 := \sum\limits_{T \in \Tc} \lnormel{\nabla \vv_h}^2 + \sum\limits_{E \in \Fc} \frac{k^2}{h} \lnormbel{\jump{\vv_h \cdot \tau}}^2 $ respectively. 
\begin{lemma}\label{lemma:cont}For a proper choice of the stabilization parameter $\alpha >0$ in \eqref{bilinearform::a}, there exist constants $\alpha_1>0, \alpha_2>0$ and $\alpha_3>0$ independent of the mesh-size $h$ and the polynomial order $k$ such that $a(\cdot,\cdot)$ is coercive
  \begin{align*} %\label{bilinearform::coercivity}
    a(\vv_h,\vv_h) \ge \nu \alpha_3 \dgnorm{\vv_h}^2 \quad \forall \vv_h \in \vV_h,
  \end{align*}
  and $a(\cdot,\cdot)$ and $b(\cdot,\cdot)$ are continuous
  \begin{align*} %\label{bilinearform::continuity}
    |a(\vu_h,\vv_h)| \le \nu \alpha_1 \dgnorm{\vu_h}\dgnorm{\vv_h} \quad \forall \vv_h, \vu_h \in \vV_h, \quad |b(\vu_h,q_h)| \le \alpha_2 \dgnorm{\vu_h} \lnorm{q_h} \quad \forall \vu_h \in \vV_h, \forall q_h \in  Q_h.
  \end{align*}

\end{lemma}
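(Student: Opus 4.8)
The plan is to read \eqref{bilinearform::a} as a symmetric interior penalty form acting only on the tangential jumps (normal continuity being built into $\vV_h$), and to reduce everything to one $k$-robust ingredient: the classical polynomial trace inequality, which on a shape regular element $T\in\Tc$ with edge $E\subset\partial T$ reads $\lnormbel{w}^2 \cle \tfrac{k^2}{h}\lnormel{w}^2$ for all $w\in\pol^{k-1}(T)$, with a constant depending only on the shape regularity of $\Tc$. Applying it componentwise to $\nabla\vv_h$ for $\vv_h\in\vV_h$ yields $\lnormbel{\nabla\vv_h\cdot\vn}^2 \cle \tfrac{k^2}{h}\lnormel{\nabla\vv_h}^2$, which is exactly the scaling hard-wired into the penalty weight $\alpha k^2/h$ and into $\dgnorm{\cdot}$; this matching is precisely what will keep all constants below independent of $k$.

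For coercivity I would first expand $a(\vv_h,\vv_h)$: the two consistency terms coincide, so $a(\vv_h,\vv_h) = \nu\sum_{T\in\Tc}\lnormel{\nabla\vv_h}^2 - 2\nu\sum_{E\in\Fc}\int_E \mean{\nabla\vv_h\cdot\vn}\jump{\vv_h\cdot\tau}\intd s + \nu\sum_{E\in\Fc}\tfrac{\alpha k^2}{h}\lnormbel{\jump{\vv_h\cdot\tau}}^2$. The middle sum is bounded edgewise by Cauchy--Schwarz, then $\lnormbel{\mean{\nabla\vv_h\cdot\vn}}$ is controlled by the one- or two-sided traces and the discrete trace inequality; using that each triangle carries at most three edges and Young's inequality with a suitably small weight, the volume part generated by this term is absorbed into $\tfrac12\nu\sum_{T\in\Tc}\lnormel{\nabla\vv_h}^2$, at the price of a penalty contribution $\nu c_0\sum_{E\in\Fc}\tfrac{k^2}{h}\lnormbel{\jump{\vv_h\cdot\tau}}^2$ with $c_0$ depending only on shape regularity. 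Choosing $\alpha > c_0$ then gives $a(\vv_h,\vv_h)\ge \nu\alpha_3\dgnorm{\vv_h}^2$ with $\alpha_3 = \min\{\tfrac12,\alpha - c_0\} > 0$, independent of $h$ and $k$.

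Continuity of $a$ uses the same two tools applied to $a(\vu_h,\vv_h)$, but now without the Young step: Cauchy--Schwarz on each edge together with the discrete trace inequality bounds each of the four terms by a fixed multiple of $\dgnorm{\vu_h}\dgnorm{\vv_h}$, which yields $\alpha_1$. Continuity of $b$ is immediate from $\lnormel{\div\vu_h}\le\sqrt2\,\lnormel{\nabla\vu_h}$ on each $T$ and Cauchy--Schwarz, so $|b(\vu_h,q_h)|\le\sqrt2\,\dgnorm{\vu_h}\lnorm{q_h}$, i.e.\ $\alpha_2=\sqrt2$.

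The only genuine point of care is the very first step: the penalty exponent must match the $k$-growth of the polynomial trace inequality, otherwise the constants degenerate in $k$. Everything else is the standard SIP bookkeeping, and it stays $k$-robust precisely because every auxiliary estimate invoked (trace inequality, finite edge overlap, Cauchy--Schwarz, Young) is itself $k$-robust.
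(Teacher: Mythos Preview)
Your argument is correct and is precisely the standard symmetric interior penalty argument the paper has in mind: the paper's own proof merely says that continuity follows from the definition of $\dgnorm{\cdot}$ and that coercivity follows ``with similar arguments as in \cite{MR2684358}'', which is exactly the $k$-robust discrete trace inequality plus Young/absorption step you spell out. There is nothing to add; your proposal just makes the cited argument explicit.
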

\begin{proof}
  The continuity properties of $a(\cdot,\cdot)$ and $b(\cdot,\cdot)$ follow by the definition of the norm $\dgnorm{\cdot}$. The coercivity follows with similar arguments as in \cite{MR2684358}.
\end{proof} %\newline \newline
%Beside the results of lemma \ref{lemma:cont} the LBB condition is the last estimate that is needed. This is the main contribution of this work.
\begin{theorem} \label{dis::maintheorem}
  There exists a constant $\beta > 0$ independent of the polynomial order $k$ and the mesh-size $h$ such that
  \begin{align*} %\label{dis::maintheorem::estimation}
\sup\limits_{\vO \neq \vv \in \vV_h} \frac{b(\vv_h, q_h)}{\dgnorm{\vv_h}} \ge \beta \lnorm{q_h} \quad \forall q_h \in Q_h.
  \end{align*}
\end{theorem}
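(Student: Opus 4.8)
The plan is to establish the inf-sup estimate by the classical Fortin argument, the whole difficulty being concentrated in the construction of a Fortin operator that is stable in the mesh-dependent norm $\dgnorm{\cdot}$ \emph{uniformly in} $k$. Concretely I would construct a linear operator $\fortin\colon[H^1_0(\Omega)]^2\to\vV_h$ with
\begin{align*}
  \int_T\div(\fortin\vv)\,q_h\intd x=\int_T\div\vv\,q_h\intd x\quad\forall q_h\in Q_h,\ \forall T\in\Tc,\qquad\text{and}\qquad\dgnorm{\fortin\vv}\cle\|\vv\|_1,
\end{align*}
both constants independent of $k$ and $h$; the first property makes sense because $\div\vV_h=Q_h$. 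Given $q_h\in Q_h\subset L^2_0(\Omega)$, the continuous divergence stability yields $\vw\in[H^1_0(\Omega)]^2$ with $\div\vw=q_h$ and $\|\vw\|_1\cle\lnorm{q_h}$, and testing the supremum with $\vv_h:=\fortin\vw$ gives
\begin{align*}
  \sup_{\vO\neq\vv_h\in\vV_h}\frac{b(\vv_h,q_h)}{\dgnorm{\vv_h}}\ge\frac{b(\fortin\vw,q_h)}{\dgnorm{\fortin\vw}}=\frac{b(\vw,q_h)}{\dgnorm{\fortin\vw}}=\frac{\lnorm{q_h}^2}{\dgnorm{\fortin\vw}}\cge\frac{\lnorm{q_h}^2}{\|\vw\|_1}\cge\lnorm{q_h},
\end{align*}
which is the assertion.

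I would build $\fortin$ element by element, after the affine transformation to the reference triangle $\Tref$. In a first step the $H(\div)$-conformity (normal continuity across every $E\in\Fc$) together with the divergence constraint are realised by a \emph{commuting} interpolation: its normal trace on an edge $E$ is an $L^2(E)$-projection of $\vv\cdot\vn|_E$, which is single-valued since $\vv$ is globally continuous, so normal continuity is inherited, and the interior degrees of freedom are fixed so that $\div(\fortin\vv)=\projq\div\vv$ on each $T$ (compatible with the boundary data by the divergence theorem). Equipping this interpolation with an $H^1$-bound whose constant does not depend on $k$ is already non-trivial and rests on a polynomial lifting of edge data into the element with $H(\div)$-control, a scalar relative of the extension operator discussed below.

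The genuinely new point, which I expect to be the main obstacle, is the control of the tangential--jump part $\sum_{E\in\Fc}\tfrac{k^2}{h}\lnormbel{\jump{\fortin\vv\cdot\tau}}^2$. I would treat it by a Helmholtz-type correction: the only admissible modifications of $\fortin\vv$ that touch neither the divergence nor the normal trace are of the form $\curl\psi_h$ for a continuous piecewise polynomial stream function $\psi_h$, because $\div\curl\psi_h=0$, $\curl\psi_h\cdot\vn=\nabla\psi_h\cdot\tau$ and $\curl\psi_h\cdot\tau=-\nabla\psi_h\cdot\vn$. Choosing $\psi_h$ so that the tangential traces become single-valued across interior edges up to a remainder controlled in the weighted trace norms, the determination of $\psi_h$ reduces, on each reference triangle, to prescribing the Cauchy data $(\psi_h|_{\partial\Tref},\ \nabla\psi_h\cdot\vn|_{\partial\Tref})$ and extending it to a polynomial on $\Tref$ with $|\psi_h|_{2,\Tref}$ bounded by the norm of the data -- i.e.\ to invoking a $k$-robust polynomial $H^2$-stable extension operator. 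The compatibility of this Cauchy data at the three vertices of $\Tref$ is precisely why one splits the boundary data into a vertex part and edge parts belonging to $\pol_{00}^{k}(E_i)$ and measures the latter in the weighted norms $\|\cdot\|_{1/2^{*},E_i}$ and $\|\cdot\|_{0^{*},E_i}$; the zero-mean subspaces $\bpol^{k}_{\tau}(\Tref)$ and $\bpol^{k}_{\vn}(\Tref)$ arise as the solvability conditions of the local problems, and the weight $k^2/h$ in $\dgnorm{\cdot}$ is exactly the power delivered by the accompanying ($k$-robust) trace and inverse estimates.

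Thus the full weight of the proof lies in constructing and analysing, with constants independent of the polynomial degree, the polynomial $H^2$-stable extension operator on $\Tref$ (done in Chapter 5) and its $H(\div)$ analogue; granted these, the two Fortin properties above -- and hence the theorem -- follow from the routine scaling, assembly, and standard trace and inverse-inequality arguments indicated here.
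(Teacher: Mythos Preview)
Your overall strategy---prove the LBB condition via a Fortin operator whose $k$-robust stability ultimately rests on the polynomial $H^2$-extension of Chapter~5---is exactly the paper's, but the construction of $\fortin$ differs. The paper does \emph{not} build a single commuting interpolant and then repair the tangential jumps with a globally continuous stream function. Instead it writes $\fortin=\fortin^1+\fortin^2$, where $\fortin^1$ is the classical (fixed-degree, $h$-uniform) Fortin operator for the pair $(\bpol^{2}(\Tc)\cap[C^0(\Omega)]^2)\times\pol^{0}(\Tc)$; this absorbs all the inter-element coupling in one shot and leaves a divergence error orthogonal to constants on each $T$. The high-order correction $\fortin^2$ is then obtained by solving, \emph{independently on every element}, a local Stokes problem in $\widehat\vV_{h,0}\times\widehat Q_h$; its $k$-robust stability is exactly the local LBB condition (Theorem~\ref{lbb::locallbbtheorem}), which in turn is proved not via an interpolant but via the regularised Poincar\'e operator of Costabel--McIntosh (to realise the divergence) followed by the $H^2$-extension (to kill the normal trace while keeping $k\,\|(\cdot)\cdot\tau\|_{0,\partial\Tref}$ bounded).

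Two consequences of this difference are worth noting. First, because $\div(\vu-\fortin^1\vu)\perp\pol^0(T)$, the local solution $\vw_h^T$ automatically satisfies $\vw_h^T\cdot\vn=0$ on $\partial T$, so $\fortin^2$ does not disturb normal continuity and no global stream function---with its attendant vertex compatibility and cross-edge matching---is ever needed; your scheme pushes this coupling into the choice of a continuous $\psi_h$, which is where your sketch becomes vague. Second, the paper never invokes an ``$H(\div)$ analogue'' of the extension: the Poincar\'e operator already delivers a polynomial right-inverse of the divergence with an $H^1$-bound, so the only nonstandard lifting required is the $H^2$ one. Your route may well be completable, but it introduces an extra ingredient the paper avoids and leaves the $k$-uniform $H^1$ control of your first-step interpolant as an unproved assumption.
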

\begin{proof}
The proof is presented in chapter \ref{sec:highorderinfsup}.
\end{proof}
\begin{theorem}
  Assume $(\vu_h, p_h)$ in $\vV_h\times Q_h$ is the solution of the discrete problem \eqref{dis::stokesproblem} and $(u,p)$ is the exact solution of the Stokes problem \eqref{int::stokesequations}. Furthermore assume regularity $\vu \in [H^{s+1}(\Omega)]^2$ and $p \in H^s(\Omega)$. Then there exists a constant $c_{err}$ independent of the mesh-size $h$ and the polynomial order $k$ such that  for $s \ge 1$ and $s \le k$ it holds
  \begin{align}\label{optimalerrorest}
\dgnorm{\vu - \vu_h} + \lnorm{\projq p-p_h} \le c_{err} \left( \frac{h}{k} \right)^{s}  \| \vu \|_{s+1},
  \end{align}
  where $\projq$ is the $L^2$ projector onto $Q_h$.
\end{theorem}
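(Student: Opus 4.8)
The plan is to follow the standard Brezzi framework for consistent mixed discretizations, but to exploit the exactly divergence-free structure $\div\vV_h=Q_h$ together with the $k$-robust inf-sup estimate of Theorem~\ref{dis::maintheorem} and the stability bounds of Lemma~\ref{lemma:cont}, so that the pressure drops out of the velocity error and the right-hand side of \eqref{optimalerrorest} involves $\|\vu\|_{s+1}$ only. First I would check \emph{consistency} of the scheme: since $\vu\in[H^{s+1}(\Omega)]^2$ with $s\ge1$, the tangential trace of $\vu$ is single-valued, so $\jump{\vu\cdot\tau}=0$, the gradient traces in \eqref{bilinearform::a} are well defined, and elementwise integration by parts against an arbitrary $\vv_h\in\vV_h$ --- using normal continuity to replace $\jump{\vv_h}$ by $\jump{\vv_h\cdot\tau}\,\tau$ --- shows that $(\vu,p)$ satisfies the first line of \eqref{dis::stokesproblem} with $p$ in place of $p_h$. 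Subtracting \eqref{dis::stokesproblem} gives the Galerkin orthogonalities
\[ a(\vu-\vu_h,\vv_h)+b(\vv_h,p-p_h)=0\quad\forall\vv_h\in\vV_h,\qquad b(\vu-\vu_h,q_h)=0\quad\forall q_h\in Q_h; \]
note that only $\vu\in H^2$ is needed for this step.

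For the velocity error I would fix an interpolation operator $\fortin\colon[H^{s+1}(\Omega)]^2\to\vV_h$ that commutes with the divergence, $\div\fortin\vv=\projq\div\vv$, and that is $hp$-optimal in the augmented norm $\|\vv\|_*^2:=\dgnorm{\vv}^2+\sum_{E\in\Fc}\tfrac{h}{k^2}\,\|\mean{\nabla\vv\cdot\vn}\|_{0,E}^2$, i.e.\ $\|\vv-\fortin\vv\|_*\cle(h/k)^s\|\vv\|_{s+1}$. Since $\div\vu=0$ we get $\div\fortin\vu=0$, and since $\div\vV_h=Q_h$ the discrete incompressibility of $\vu_h$ yields $\div\vu_h=0$; hence $\vw_h:=\vu_h-\fortin\vu\in\vV_h$ has $\div\vw_h=0$, so $b(\vw_h,\cdot)=0$ and testing Galerkin orthogonality with $\vw_h$ gives $a(\vu-\vu_h,\vw_h)=0$. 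Using coercivity (Lemma~\ref{lemma:cont}) and a continuity estimate of $a$ extended to the nonconforming first argument --- the weight $h/k^2$ in $\|\cdot\|_*$ is exactly what makes the $\mean{\nabla\cdot\vn}\,\jump{\cdot\,\tau}$ terms Cauchy--Schwarz--controllable --- one obtains
\[ \nu\alpha_3\,\dgnorm{\vw_h}^2\le a(\vw_h,\vw_h)=a(\vu-\fortin\vu,\vw_h)\cle\nu\,\|\vu-\fortin\vu\|_*\,\dgnorm{\vw_h}, \]
where $\nu$ cancels, and the triangle inequality gives $\dgnorm{\vu-\vu_h}\cle\|\vu-\fortin\vu\|_*\cle(h/k)^s\|\vu\|_{s+1}$.

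For the pressure error I would apply the inf-sup condition of Theorem~\ref{dis::maintheorem} to $\projq p-p_h\in Q_h$. Since $\div\vv_h\in Q_h$ and $\projq$ is the $L^2$-projection, $b(\vv_h,\projq p)=b(\vv_h,p)$; inserting consistency and the discrete momentum equation then turns $b(\vv_h,\projq p-p_h)$ into $a(\vu_h-\vu,\vv_h)$, which by the (extended) continuity of $a$ and the velocity bound above is $\cle\nu\,\|\vu-\fortin\vu\|_*\,\dgnorm{\vv_h}$. Dividing by $\beta$ yields $\lnorm{\projq p-p_h}\cle\tfrac{\nu}{\beta}(h/k)^s\|\vu\|_{s+1}$; adding the two estimates and absorbing $\nu$, $\beta$ and the constants of Lemma~\ref{lemma:cont}, all independent of $h$ and $k$, into $c_{err}$ completes the proof.

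The genuinely difficult ingredient, the $k$-independence of $\beta$, is already provided by Theorem~\ref{dis::maintheorem}. The remaining work for this proof is therefore (i) constructing the divergence-commuting interpolation operator $\fortin$ into $\vV_h$ and establishing its $k$-explicit approximation bound in $\|\cdot\|_*$, in particular for the edge contributions, which requires $p$-robust trace and inverse inequalities on $\Tref$; and (ii) extending the continuity (and, where needed, coercivity) estimates of Lemma~\ref{lemma:cont} to arguments that are only elementwise smooth, so that $a(\vu-\fortin\vu,\vw_h)$ and $a(\vu_h-\vu,\vv_h)$ make sense and are bounded in $\|\cdot\|_*$. Both are technical but standard, and, crucially, neither interacts with the pressure --- which is the reason why only $\|\vu\|_{s+1}$ appears on the right-hand side of \eqref{optimalerrorest}.
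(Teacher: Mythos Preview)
Your argument is correct and follows the textbook Brezzi route: consistency, a divergence-commuting interpolant $\fortin$ with $hp$-optimal bounds in the augmented norm, and then the exact divergence-free structure to kill the pressure term in the velocity estimate. The paper takes a genuinely different detour. Instead of constructing (or citing) an interpolant that is simultaneously divergence-commuting \emph{and} $hp$-optimal in $\|\cdot\|_*$, it introduces the DG solution $\vw_h\in\vV_h$ of the auxiliary \emph{Poisson} problem $a(\vw_h,\vv_h)=l(\vv_h)-b(\vv_h,p)$ and imports the $hp$-error estimate $\dgnorm{\vu-\vw_h}\cle(h/k)^s\|\vu\|_{s+1}$ from the scalar DG literature. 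The pair $(\vu_h-\vw_h,\,p_h-\projq p)$ then solves a discrete saddle-point problem with right-hand side $-b(\vw_h,q_h)$, and the abstract stability bound together with $\|\div\vw_h\|_0=\|\div(\vw_h-\vu)\|_0\le\dgnorm{\vu-\vw_h}$ closes the argument.

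The trade-off is this: your approach is more self-contained but leaves you the two technical tasks you list under (i) and (ii) --- in particular, building an $\fortin$ that commutes with $\div$ \emph{and} has sharp $(h/k)^s$ rates in the norm with the edge weights, which is not quite the Fortin operator the paper actually constructs (that one is only shown to be $H^1$-stable, not approximating). The paper's approach sidesteps both tasks by outsourcing all $hp$-approximation questions to an already-analysed scalar problem, at the price of a slightly less transparent structure. Either way the essential input is the same: $\div\vV_h=Q_h$ to replace $p$ by $\projq p$, and the $k$-robust inf-sup constant from Theorem~\ref{dis::maintheorem}.
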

\begin{proof}
  In a first step we discretize the Poisson problem $-\nu \laplace \vu = f + \nabla p$ using a DG approximation, i.e. let $\vw_h$ be the solution of
  \begin{align*}
    a(\vw_h, \vv_h) = l(v_h) - b(\vv_h, p) \quad \forall \vv_h \in \vV_h,
  \end{align*}
  where we used that $\vv_h \in H(\div{})(\Omega)$, thus we used integration by parts for $\int_\Omega \nabla p \cdot \vv_h \intd x$. Using the estimate from chapter 3.2 in \cite{MR2684358} including the properties of the $L^2$ projector on triangles, equation (1.4) in \cite{MR3163888}, we get
  \begin{align} \label{errorestlaplace}
\dgnorm{\vu - \vw_h} \le \tilde{c}_{err} \left( \frac{h}{k} \right)^{s}  \| \vu \|_{s+1}.
  \end{align}
  Since $(\vu_h, p_h)$ is the solution of the discrete problem \eqref{dis::stokesproblem} we have
  \begin{align*}
    \begin{array}{rcll}
      a(\vu_h,\vv_h) + b(\vv_h, p_h) &=& b(\vv_h, p) + a(\vw_h, \vv_h)  \quad &\forall \vv_h  \in  \vV_h \\
    b(\vu_h, q_h) & = & 0  \quad & \forall q_h  \in  Q_h.              
    \end{array}
  \end{align*}
and thus
    \begin{align*}
    \begin{array}{rcll}
      a(\vu_h - \vw_h,\vv_h) + b(\vv_h, p_h-p) &=& 0  \quad &\forall \vv_h  \in  \vV_h \\
    b(\vu_h- \vw_h, q_h) & = & -b(\vw_h, q_h)  \quad & \forall q_h  \in  Q_h.              
    \end{array}
    \end{align*}
    Due to the property $\div \vV_h = Q_h$ we replace the term $b(\vv_h, p_h-p)$ in the first row by $b(\vv_h, p_h - \projq p)$. Using  $p_h - \projq p \in Q_h$ and the standard stability estimate of saddle point problems, see for example theorem 4.2.3 in \cite{brezzifortin}, we get
    \begin{align*}
      \dgnorm{\vu_h - \vw_h} + \frac{\beta}{\alpha_1} \| p_h - \projq p \|_0 &\le (1+ \frac{2\alpha_1}{\alpha_3\beta}) \| \div{\vw_h} \|_0 \\
      &=  (1+ \frac{2\alpha_1}{\alpha_3\beta}) \| \div{\vw_h} - \underbrace{\div{\vu}}_{=0} \|_0 \le (1+ \frac{2\alpha_1}{\alpha_3\beta}) \dgnorm{\vu - \vw_h}.
    \end{align*}
    %As $\| \div{\vw_h} - \div{\vu} \|_0 \le \dgnorm{\vu - \vw_h}$
    The estimation \eqref{optimalerrorest} follows with \eqref{errorestlaplace}, the triangle inequality and the robustness of the constants $\alpha_1, \alpha_3$ and $\beta$ with respect to the mesh-size $h$ and $k$.
    %$\dgnorm{\vu - \vu_h} \cle \dgnorm{\vu - \vw_h} + \dgnorm{\vu_h - \vw_h}$.
\end{proof}
\begin{remark} \label{remark::local}
  The introduced method can also be used for a non quasi--uniform triangulation, thus using triangles with different sizes $h_T$ and furthermore individual polynomial degrees $k_T$. By that we get a similar local error estimation as in \cite{MR2684358},
  \begin{align*}
\dgnorm{\vu - \vu_h} + \lnorm{\projq p-p_h} \le c_{err} \sqrt{ \sum\limits_{T \in \Tc}\left( \frac{h_T}{k_T} \right)^{2s}  \| \vu \|_{s+1,T}^2}.
  \end{align*}
 % Note that we now only need a local regularity assumption of the exact solution $\vu$.
\end{remark}
%\begin{remark}
%  Although we consider the discretization presented in \cite{MR2136994} the result of theorem \ref{dis::maintheorem} holds for similar methods or hybridized versions as presented in \cite{MR3511719}.
%\end{remark}
\subsection{Continuous LBB condition as motivation for an $H^2$ extension}
In this chapter we present a proof for the infinite dimensional version of the LBB condition of the Stokes problem as it can be found in \cite{Bernardi:1997:SMH}. For this we define the velocity space $\vV := [H^1_0(\Omega)]^2$,  the pressure space $Q := L^2_0(\Omega)$ and show that
\begin{align*}
      \sup\limits_{\vO \neq \vv \in \vV} \frac{b(\vv, q)}{\|\vv\|_1} \ge \beta_\infty \lnorm{q} \quad \forall q \in Q.
\end{align*}
Note that the LBB condition is equivalent to the existence of an $H^1$ stable right inverse of the divergence operator.
\begin{theorem}\label{dis::divisonto}
  Let $\Omega \subset \R^2$ be a bounded domain with a smooth Lipschitz boundary $\partial \Omega \in \mathcal{C}^{1,1}$. The divergence operator from $\vV$ to $Q$ is onto, so for every $q \in Q$ there exists a $\vv \in \vV$ such that
  \begin{align*}
    \div{\vv} = q, \quad \textrm{and} \quad \|\vv\|_1 \cle \lnorm{q}.
  \end{align*} 
\end{theorem}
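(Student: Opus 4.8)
\emph{Proof proposal.} The statement to be proved is the existence of a bounded right inverse of $\div$; the continuous LBB inequality then follows by the duality remark made above. So, given $q \in L^2_0(\Omega)$, I must produce $\vv \in [H^1_0(\Omega)]^2$ with $\div \vv = q$ and $\| \vv \|_1 \cle \lnorm{q}$. The plan is to write $\vv = \vw - \vz$, where $\vw \in [H^1(\Omega)]^2$ already carries the divergence but is only normal-trace free, and $\vz$ is a divergence-free field matching the remaining (purely tangential) boundary trace of $\vw$. Producing $\vz$ is where an $H^2$-stable lifting of boundary data is needed; this is the infinite-dimensional prototype of the polynomial $H^2$-stable extension operator constructed later in the paper, which is precisely why the argument is recorded here as motivation.

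First I solve the Neumann problem $\laplace \phi = q$ in $\Omega$ with $\nabla\phi \cdot \vn = 0$ on $\partial\Omega$; the compatibility condition holds because $\int_\Omega q \intd x = 0$, so $\phi$ exists and is unique up to a constant. Since $\partial\Omega \in \mathcal{C}^{1,1}$, elliptic regularity for the Neumann problem yields $\phi \in H^2(\Omega)$ with $\| \phi \|_2 \cle \lnorm{q}$. Put $\vw := \nabla\phi$, so that $\div \vw = q$, $\| \vw \|_1 \cle \lnorm{q}$ and $\vw \cdot \vn = 0$ on $\partial\Omega$; hence $g := \trace \vw \in [H^{1/2}(\partial\Omega)]^2$ is purely tangential, $g \cdot \vn = 0$, with $\| g \|_{1/2,\partial\Omega} \cle \lnorm{q}$. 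It remains to remove this trace by a divergence-free correction, which in two dimensions I take to be a stream-function field. Orienting the boundary frame by $\tau = \vn^\perp$, I use the trace theorem for $H^2$ to choose $\psi \in H^2(\Omega)$ with $\trace \psi = 0$ and $\nabla\psi \cdot \vn = g \cdot \tau$ on $\partial\Omega$, satisfying $\| \psi \|_2 \cle \| g \cdot \tau \|_{1/2,\partial\Omega} \cle \lnorm{q}$ (here $\partial\Omega \in \mathcal{C}^{1,1}$ guarantees that $\tau$ is Lipschitz along the boundary and hence a multiplier on $H^{1/2}(\partial\Omega)$). Setting $\vz := (\nabla\psi)^\perp$ we get $\div \vz = 0$ identically and $\| \vz \|_1 \cle \lnorm{q}$, while on $\partial\Omega$ one computes $\vz \cdot \vn = -\,\nabla\psi \cdot \tau$ and $\vz \cdot \tau = \nabla\psi \cdot \vn$; since $\trace \psi = 0$ forces $\nabla\psi \cdot \tau = 0$ along $\partial\Omega$, this gives $\vz \cdot \vn = 0 = g \cdot \vn$ and $\vz \cdot \tau = g \cdot \tau$, i.e. $\trace \vz = g = \trace \vw$. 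Then $\vv := \vw - \vz$ lies in $[H^1_0(\Omega)]^2$, satisfies $\div \vv = q - 0 = q$, and $\| \vv \|_1 \le \| \vw \|_1 + \| \vz \|_1 \cle \lnorm{q}$, as claimed.

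The only genuinely delicate step is the divergence-free correction: one must know that a purely tangential $H^{1/2}(\partial\Omega)$ field is the trace of a divergence-free $H^1(\Omega)$ field whose stream function is bounded in $H^2(\Omega)$, i.e. that the $H^2$ lifting of the Cauchy data $(0,\, g \cdot \tau)$ exists with the stated estimate. This is classical for $\mathcal{C}^{1,1}$ domains -- it is here, together with the Neumann regularity of the first step, that the boundary smoothness assumption is actually used -- but it is exactly the model that the later chapters render quantitative and polynomial-robust. An alternative proof, bypassing the stream function, would invoke the Bogovskii integral operator on a subdomain star-shaped with respect to a ball and glue with a partition of unity; the route above is the one that mirrors the discrete construction used for Theorem~\ref{dis::maintheorem}.
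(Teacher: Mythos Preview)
Your proof is correct and follows essentially the same route as the paper's: solve the Neumann problem to obtain a first field with vanishing normal trace, then remove the remaining tangential trace by the curl of an $H^2$ stream function obtained from the lifting of the Cauchy data $(0,\,\vw\cdot\tau)$. The only differences are cosmetic sign conventions (the paper sets $\partial_{\vn}\psi = -\vv\cdot\tau$ and \emph{adds} $\curl\psi$, whereas you set $\partial_{\vn}\psi = +\vw\cdot\tau$ and \emph{subtract}) together with your extra commentary on the Bogovskii alternative.
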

\begin{proof}
  Let $q$ be an arbitrary function in $Q$. In the first step we consider the Poisson problem $\laplace \varphi = q$ in $\Omega$ with Neumann boundary conditions $\nabla \varphi \cdot \vn = 0$ on $\partial \Omega$.
  Due to the zero mean valaue of $q$ this problem has a unique solution in $H^1(\Omega) / \R$. Now set $\vv := \nabla \varphi$ to get $\div{\vv} = \laplace \varphi = q$ and using a regularity result for the Poisson problem we get  $\| v\|_1 = \| \varphi \|_2 \cle \lnorm{q}$. Furthermore note that we already have $\vv \cdot n = \nabla \varphi \cdot n = 0$ on the boundary $\partial \Omega$, so the idea is to construct a correction for the tangential component to satisfy the zero boundary values of $\vV$. Thus, we seek for a function $\psi \in H^2(\Omega)$ that fulfills 
  \begin{align*}
\psi = 0 \quad \textrm{on } \partial \Omega \quad \textrm{and} \quad     \frac{\partial \psi}{\partial n} =  - \vv \cdot \tau \quad \textrm{on } \partial \Omega \quad \textrm{and} \quad \| \psi \|_2 \cle \| \vv \|_1.
  \end{align*}
  The existence of such a function holds true since we have a smooth boundary (see Theorem 1.12 in \cite{Bernardi:1997:SMH}). Now we set $\tilde{\vv}:= \vv + \curl{\psi}$ to get $\div{\tilde{\vv}} = \div{\vv} + \div{\curl{\psi}} = q$ in $\Omega$ and on the boundary $\partial \Omega$ 
  \begin{align*}
    \tilde{\vv} \cdot \vn = \vv \cdot \vn + \curl{\psi} \cdot \vn = \nabla \psi \cdot \tau  = 0 \quad \textrm{and} \quad   \tilde{\vv} \cdot \tau = \vv \cdot \tau + \curl{\psi} \cdot \tau = \vv \cdot \tau +  \nabla \psi \cdot \vn  = 0.
  \end{align*}
Finally, due to the $H^2$-continuity of $\psi$, we get $\|\tilde{\vv}\|_1 = \|\vv\|_1 + \|\curl{\psi}\|_1 \cle \|\vv\|_1 \cle \lnorm{q}$.
\end{proof}
\\ \newline
It now follows immediately
\begin{align*} %\label{dis::divtolbb}
  \sup\limits_{\vO \neq \vv \in \vV} \frac{b(\vv, q)}{\|\vv\|_1} \cge \frac{\int_\Omega \div{\tilde{\vv}}{q} \intd x }{\|\tilde{\vv}\|_1} \cge \frac{\lnorm{q}^2}{\lnorm{q}} \cge \lnorm{q}.
\end{align*}
The crucial part of this proof was the existence of the correction $\psi$ which is stable in the $H^2$-norm. This holds true in the case of a smooth boundary of $\Omega$, but as the boundary of an element $T \in \Tc$ is just in $\mathcal{C}^{0,1}$ we get a problem when we want to adapt this proof to show the main theorem \ref{dis::maintheorem}. Such $H^2$-stable extensions of boundary values for non regular boundaries $\partial \Omega$ which are defined as a union of a finite number of regular parts are considered in \cite{grisvard1985elliptic} and \cite{bernardi1992}. For this they assume that {\it compatibility conditions} are satisfied at the points where two parts of the boundary gather. Those conditions are quite restrictive, thus do not hold true for all traces of polynomials, and that is why such a proof can not be used for example in the case of continuous velocity elements. Considering only normal continuous approximations with a tangential continuity only in a DG sense creates enough freedom to construct such an $H^2$-extension.

%%% Local Variables: 
%%% mode:latex
%%% TeX-master: "main"
%%% End:

\section{Robust High order LBB estimation}
\label{sec:highorderinfsup}In this chapter we present the proof of theorem \ref{dis::maintheorem} in similar steps as in the proof of theorem \ref{dis::divisonto}. For this we assume the existence of a stable $H^2$-extension which is then presented in chapter \ref{sec:h2extension}.
\begin{theorem}[$H^2$-extension] \label{lbb:h2ext}
  For every $k$ there exists an operator $\ext : \bpoln^k(\Tref) \rightarrow \pol^{k+1}(\Tref)$ such that for $\vu_h  \in \bpoln^k(\Tref)$ it holds 
  \begin{align}
    &\curl{\ext(\vu_h)} \cdot \vn = \vu_h \cdot \vn \quad \textrm{on } \partial \Tref  \label{lbb:h2ext:propA} \\
    & \|(\vu_h - \curl{\ext(\vu_h)}) \cdot \tau \|_{0,\partial \Tref} \cle \frac{1}{k} \| \vu_h \|_{1,\Tref} \label{lbb:h2ext:propB} \\
    & \| \ext(\vu_h) \|_{2,\Tref} \cle \| \vu_h \|_{1,\Tref} \label{lbb:h2ext:propC}
  \end{align}
\end{theorem}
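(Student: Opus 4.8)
The plan is to recast the statement as a polynomial lifting problem on the fixed domain $\Tref$. Writing $\psi := \ext(\vu_h)$ and using the boundary identities $\curl\psi \cdot \vn = \partial_\tau \psi$ and $\curl\psi \cdot \tau = \partial_n \psi$, property \eqref{lbb:h2ext:propA} is equivalent to $\partial_\tau \psi = \vu_h \cdot \vn$ on each edge of $\partial\Tref$, property \eqref{lbb:h2ext:propB} to $\| \partial_n \psi - \vu_h \cdot \tau \|_{0,\partial\Tref} \cle \tfrac1k \| \vu_h \|_{1,\Tref}$, and \eqref{lbb:h2ext:propC} is unchanged. Since $\vu_h \in \bpoln^k(\Tref)$ satisfies $\int_{\partial\Tref} \vu_h \cdot \vn = 0$, the tangential primitive $g$ of $\vu_h \cdot \vn$ is a single-valued continuous function on $\partial\Tref$ that is edgewise a polynomial of degree $k+1$; prescribing $\psi|_{\partial\Tref} = g$ forces \eqref{lbb:h2ext:propA}. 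So I want a linear, $H^2$-stable polynomial extension into $\Tref$ that matches the Dirichlet datum $g$ \emph{exactly} and the Neumann datum $\vu_h\cdot\tau$ \emph{up to an $O(1/k)$ error} in $L^2(\partial\Tref)$.

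First I would assemble a Dirichlet lifting. By the domain- and degree-independent trace theorem on $\Tref$ one has $\| \vu_h \cdot \tau \|_{1/2,E_i} \cle \| \vu_h \|_{1,\Tref}$ on each edge and, since $\partial_\tau g = \vu_h\cdot\vn$ and $g$ is continuous, also $\| g \|_{3/2,\partial\Tref} \cle \| \vu_h \|_{1,\Tref}$ after fixing the irrelevant additive constant. I would then construct $\psi_D \in \pol^{k+1}(\Tref)$ with $\psi_D|_{\partial\Tref} = g$ and $\| \psi_D \|_{2,\Tref} \cle \| g \|_{3/2,\partial\Tref}$ by a vertex/edge splitting: subtract the affine function carrying the three vertex values, then lift the remaining edgewise data (which lies in $\pol_{00}^{k+1}(E_i)$ after a further low-order vertex-derivative correction) by a $k$-robust single-edge-to-triangle Dirichlet lifting. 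The role of $\psi_D$ is to reduce the task to a pure Neumann lifting: it remains to add $\psi_N \in \pol^{k+1}(\Tref)$ with $\psi_N|_{\partial\Tref} = 0$ and $\partial_n\psi_N \approx w := \vu_h\cdot\tau - \partial_n\psi_D|_{\partial\Tref}$, still edgewise a polynomial of degree $\le k$ with $\| w \|_{1/2,E_i} \cle \| \vu_h \|_{1,\Tref}$.

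The essential obstruction — precisely the failure of the corner \emph{compatibility conditions} noted after Theorem~\ref{dis::divisonto} — is that an $H^2$ function with vanishing Dirichlet trace has a Neumann trace that must vanish at the vertices in the weighted sense measured by $\| \cdot \|_{1/2^*,E_i}$, whereas $w$ need not. Here I would use the tangential (DG) slack: on each edge I would correct $w$ to $\tilde w$ by subtracting, near each vertex, a scalar multiple of an integrated Jacobi polynomial $\intjacobi$ of degree $\le k$ that carries the vertex value (and a suitable weighted-derivative defect), is sharply localized, small in $L^2$, and controlled in $\| \cdot \|_{1/2^*,E_i}$; call this vertex-correction operator $\pop$. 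Combining the $k$-robust $H^{1/2}$ bound on $w$ (not merely its $L^2$ bound) with the decay estimates for $\intjacobi$ should give $\| w - \tilde w \|_{0,\partial\Tref} \cle \tfrac1k \| \vu_h \|_{1,\Tref}$ while keeping $\| \tilde w \|_{1/2^*,E_i} \cle \| \vu_h \|_{1,\Tref}$ and $\tilde w$ vanishing at all vertices. Finally I would lift the now-compatible datum $\tilde w$ by an $H^2$-stable single-edge-to-triangle Neumann lifting (measured in $\| \cdot \|_{1/2^*,E_i}$ and built in Duffy-type coordinates), set $\ext(\vu_h) := \psi_D + \psi_N$, and check: $\partial_\tau\ext(\vu_h) = \partial_\tau g = \vu_h\cdot\vn$ gives \eqref{lbb:h2ext:propA}; $\partial_n\ext(\vu_h) = \partial_n\psi_D + \tilde w = \vu_h\cdot\tau + (\tilde w - w)$ gives \eqref{lbb:h2ext:propB}; adding the two stable bounds gives \eqref{lbb:h2ext:propC}; and $\psi_D,\psi_N \in \pol^{k+1}(\Tref)$ gives the required degree.

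I expect the genuine difficulty, i.e.\ the content of chapter~\ref{sec:h2extension}, to be twofold. The harder analytic step is constructing the $k$-robust $H^2$-stable polynomial liftings of Dirichlet data and of weighted Neumann data from a single edge of $\Tref$: the classical $H^2$-extension results are either for fixed smooth domains or presuppose the very compatibility conditions we are trying to avoid, so one is forced into a rather explicit construction (tensor-product/Duffy structure, integrated Jacobi polynomials, careful bookkeeping of the two endpoint weights $\tfrac1x + \tfrac1{1-x}$). The second delicate point is the quantitative $1/k$ gain in the vertex correction — that the corner incompatibility of $\vu_h\cdot\tau$ costs only $\tfrac1k\| \vu_h \|_{1,\Tref}$ in $L^2(\partial\Tref)$ — which is exactly where the $H(\div)$-conforming/DG-tangential structure is genuinely used and where naive point-value estimates, with their logarithmic losses, must be replaced by sharp polynomial inverse and trace inequalities.
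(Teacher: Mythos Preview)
Your proposal is correct and follows essentially the same route as the paper: the identical curl-to-gradient reduction, then a tangential (Dirichlet) extension $\exttang$ of the primitive $g$ followed by edgewise normal (Neumann) extensions $\extnorm_{E_i}$ of the residual, with the vertex-compatibility split and the $1/k$ gain obtained via integrated Jacobi polynomials exactly as you anticipate (this is Theorem~\ref{ext:dividetheorem}). The only notable difference in execution is that the paper builds $\exttang$ by explicit averaging formulas rather than an abstract vertex/edge splitting, and uses the auxiliary polynomial $u_1 = \vu_c\cdot((x,y)-V_2)$ to force the Neumann residual to vanish at the vertices, so that only its endpoint \emph{derivatives} (not values) require the Jacobi-type corrector.
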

\begin{proof}
  See chapter \ref{sec:h2extension}.
\end{proof}
\newline \newline
We first show the LBB-condition on the reference triangle $\Tref$. For this we define the spaces $\widehat{\vV}_h := \bpol^k(\Tref)$ with $\widehat{\vV}_{h,0} := \{ \vv_h \in \widehat{\vV}_h: \vv_h \cdot n = 0 ~ \textrm{on } \partial \Tref\}$ and $\widehat{Q}_h :=  \pol^{k-1}(\Tref) \cap L^2_0(\Tref)$. The norm $\dgnorm{\cdot}$ on $\widehat{\vV}_h$ now reads as
\begin{align*}
\|\vv_h\|^2_{1_h,\Tref} = \|{\nabla \vv_h}\|^2_{0,\Tref} + \sum\limits_{E \in \partial \Tref} k^2 \| \vv_h \cdot \tau \|^2_{0,E} \quad \forall \vv_h \in \widehat{\vV}_h. 
\end{align*}
\begin{theorem}[local LBB-condition] \label{lbb::locallbbtheorem}
There exists a constant $\beta > 0$ independent of the polynomial order $k$ such that
  \begin{align*} %\label{lbb:locallbb}
\sup\limits_{\vO \neq \vv_h \in \widehat{\vV}_{h,0}} \frac{b(\vv_h, q_h)}{\|\vv_h\|_{1_h,\Tref}} \ge \beta \|q_h\|_{0,\Tref} \quad \forall q_h \in \widehat{Q}_h.
  \end{align*}
\end{theorem}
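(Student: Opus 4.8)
The plan is to mimic the continuous proof of Theorem~\ref{dis::divisonto} on the reference triangle, replacing the smooth-boundary $H^2$-extension by the polynomial extension operator $\ext$ from Theorem~\ref{lbb:h2ext}. First I would fix $q_h \in \widehat{Q}_h$ and solve the Neumann problem $\laplace \varphi = q_h$ in $\Tref$ with $\nabla\varphi\cdot\vn = 0$ on $\partial\Tref$; the zero-mean condition on $q_h$ guarantees solvability, and elliptic regularity on the convex polygon $\Tref$ yields $\varphi \in H^2(\Tref)$ with $\|\varphi\|_{2,\Tref} \cle \|q_h\|_{0,\Tref}$. Setting $\vv := \nabla\varphi$ gives $\div\vv = q_h$, $\|\vv\|_{1,\Tref}\cle\|q_h\|_{0,\Tref}$, and $\vv\cdot\vn = 0$ on $\partial\Tref$. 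Of course $\vv$ is not a polynomial, so the next step is to project: let $\vv_1 := \fortin \vv$ (or a suitable Fortin-type / BDM interpolant into $\bpol^k(\Tref)$ preserving the normal trace and the divergence against $\widehat{Q}_h$), so that $b(\vv_1,q_h) = \|q_h\|_{0,\Tref}^2$, $\vv_1\cdot\vn = 0$ on $\partial\Tref$ (hence $\vv_1 \in \widehat{\vV}_{h,0}$), and the commuting/approximation properties give $\|\vv_1\|_{1_h,\Tref}\cle\|q_h\|_{0,\Tref}$ together with a control $k\|\vv_1\cdot\tau\|_{0,\partial\Tref}\cle\|\vv\|_{1,\Tref}\cle\|q_h\|_{0,\Tref}$ on the (nonzero, but small) tangential trace.

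The issue is that $\vv_1 \in \widehat{\vV}_{h,0}$ already has zero normal trace but possibly nonzero tangential trace, which contributes to $\|\vv_1\|_{1_h,\Tref}$ through the jump/edge term; this is harmless for the bound but I want a genuinely divergence-preserving field whose $\tau$-trace is as small as the extension theorem allows, and more importantly I need to stay inside the polynomial space. Since $\vv_1\cdot\vn$ restricted to $\partial\Tref$ is itself the normal trace of something in $\bpol^k(\Tref)$ — in fact it is $0$ — I instead apply $\ext$ to $\vv_1$ viewed in $\bpoln^k(\Tref)$ (note $\int_{\partial\Tref}\vv_1\cdot\vn = \int_{\Tref}\div\vv_1 = \int_{\Tref} q_h = 0$, so $\vv_1 \in \bpoln^k(\Tref)$ as required). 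Then $\psi_h := \ext(\vv_1) \in \pol^{k+1}(\Tref)$ satisfies, by \eqref{lbb:h2ext:propA}, $\curl\psi_h\cdot\vn = \vv_1\cdot\vn = 0$ on $\partial\Tref$; by \eqref{lbb:h2ext:propB}, $\|(\vv_1 - \curl\psi_h)\cdot\tau\|_{0,\partial\Tref}\cle\frac1k\|\vv_1\|_{1,\Tref}\cle\frac1k\|q_h\|_{0,\Tref}$; and by \eqref{lbb:h2ext:propC}, $\|\psi_h\|_{2,\Tref}\cle\|\vv_1\|_{1,\Tref}\cle\|q_h\|_{0,\Tref}$.

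Now I set the candidate test function $\vw_h := \vv_1 - \curl\psi_h \in \bpol^k(\Tref)$. Since $\div\curl\psi_h = 0$, we get $\div\vw_h = \div\vv_1 = q_h$, hence $b(\vw_h,q_h) = \|q_h\|_{0,\Tref}^2$. On the boundary, $\vw_h\cdot\vn = \vv_1\cdot\vn - \curl\psi_h\cdot\vn = 0 - 0 = 0$, so $\vw_h \in \widehat{\vV}_{h,0}$, and its tangential trace is exactly the quantity bounded by \eqref{lbb:h2ext:propB}. It remains to estimate $\|\vw_h\|_{1_h,\Tref}^2 = \|\nabla\vw_h\|_{0,\Tref}^2 + \sum_{E\in\partial\Tref} k^2\|\vw_h\cdot\tau\|_{0,E}^2$: the gradient term is bounded by $\|\nabla\vv_1\|_{0,\Tref} + \|\nabla\curl\psi_h\|_{0,\Tref} \cle \|\vv_1\|_{1_h,\Tref} + \|\psi_h\|_{2,\Tref} \cle \|q_h\|_{0,\Tref}$, while the edge term is precisely $\sum_E k^2\|\vw_h\cdot\tau\|_{0,E}^2 = k^2\|(\vv_1-\curl\psi_h)\cdot\tau\|_{0,\partial\Tref}^2 \cle k^2\cdot\frac1{k^2}\|q_h\|_{0,\Tref}^2 = \|q_h\|_{0,\Tref}^2$ — this is exactly where the $1/k$ in \eqref{lbb:h2ext:propB} cancels the $k^2$ weight and delivers $k$-robustness. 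Combining, $\|\vw_h\|_{1_h,\Tref}\cle\|q_h\|_{0,\Tref}$, so
\begin{align*}
  \sup_{\vO\neq\vv_h\in\widehat{\vV}_{h,0}} \frac{b(\vv_h,q_h)}{\|\vv_h\|_{1_h,\Tref}} \ge \frac{b(\vw_h,q_h)}{\|\vw_h\|_{1_h,\Tref}} \ge \frac{\|q_h\|_{0,\Tref}^2}{c\,\|q_h\|_{0,\Tref}} = \beta\,\|q_h\|_{0,\Tref},
\end{align*}
with $\beta = 1/c$ independent of $k$. The main obstacle I anticipate is the construction and stability of the polynomial divergence-preserving approximation $\vv_1$ of $\vv = \nabla\varphi$: one needs a Fortin-type operator into $\bpol^k(\Tref)$ that (i) preserves $b(\cdot,q_h)$ for all $q_h\in\widehat Q_h$, (ii) is $k$-robustly $H^1_h$-stable, and (iii) keeps the normal trace zero; verifying that such an operator exists on the reference triangle with $k$-uniform constants (e.g. via $L^2$-orthogonality of $\div\vv_1 - q_h = 0$ and polynomial-robust interpolation estimates) is the technical heart, whereas the algebraic cancellation of the $k^2$ weight against \eqref{lbb:h2ext:propB} is then immediate.
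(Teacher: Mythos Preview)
Your overall architecture is the same as the paper's: produce a polynomial $\vv_1\in\bpoln^k(\Tref)$ with $\div\vv_1=q_h$ and $\|\vv_1\|_{1,\Tref}\cle\|q_h\|_{0,\Tref}$, then subtract $\curl\ext(\vv_1)$ and use \eqref{lbb:h2ext:propA}--\eqref{lbb:h2ext:propC} exactly as you do. The second half of your argument (the candidate $\vw_h$, the cancellation of $k^2$ against the $1/k$ from \eqref{lbb:h2ext:propB}) matches the paper line for line.

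The gap is precisely the step you yourself flag as ``the technical heart'': the construction of $\vv_1$. You propose to obtain it by first solving the Neumann problem for $\varphi$ and then applying a Fortin/BDM-type interpolant that is (i) divergence-preserving, (ii) $k$-robustly $H^1$-stable, and (iii) normal-trace preserving. But the existence of such an operator on $\Tref$ with a $k$-independent constant is, by Fortin's lemma, \emph{equivalent} to the local LBB condition you are trying to prove; standard BDM interpolants are not $H^1$-stable uniformly in $k$, and ``polynomial-robust interpolation estimates'' of the required strength are not available off the shelf. So as written the argument is circular: the detour through the non-polynomial $\vv=\nabla\varphi$ gains nothing, because projecting it back into $\bpol^k(\Tref)$ with $H^1$-control is the whole difficulty.

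The paper sidesteps this entirely. Instead of solving a PDE and projecting, it uses the regularized Poincar\'e operator of Costabel--McIntosh (their Corollary~3.4): for $q_h\in\pol^{k-1}(\Tref)$ one sets
\[
  \vu_h^1(x):=\int_{\Tref}\theta(\tilde x)\,(x-\tilde x)\int_0^1 t\,q_h(\tilde x+t(x-\tilde x))\,\mathrm{d}t\,\mathrm{d}\tilde x,
\]
with $\theta\in C_0^\infty(\Tref)$. This operator maps $\pol^{k-1}\to[\pol^{k}]^2$ by construction, satisfies $\div\vu_h^1=q_h$ exactly, and is bounded $L^2\to H^1$ with a constant depending only on $\Tref$ and $\theta$, hence independent of $k$. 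That single citation replaces your entire first paragraph and removes the circularity; everything after it is identical to what you wrote.
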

\begin{proof}
  Let $q_h \in \widehat{Q}_h$ be an arbitrary function. For a point $\tilde{x} \in \Tref$ we define a local Poincare operator $\pop_{\tilde{x}}:\pol^{k-1}(\Tref) \rightarrow [\pol^{k}(\Tref)]^2$ as introduced in \cite{MR2609313} by
  \begin{align*}
q(x) \mapsto \pop_{\tilde{x}}(q_h)(x):= (x-\tilde{x}) \int_0^1 t q_h(\gamma(t)) \intd t,
  \end{align*}
  with $\gamma(t) := \tilde{x} + t(x-\tilde{x})$, and by integrating over all points in $\Tref$ we furthermore define
  \begin{align*}
\vu_{h}^1(x) := \int_{\Tref} \theta(\tilde{x}) \pop_{\tilde{x}}(q)(x) \intd \tilde{x},
  \end{align*}
  where $\theta \in C^\infty_0(\Tref)$ is a smooth function. We observe that $\div{\vu_{h}^1} = q_h$ and $\| \vu_{h}^1\|_{1,\Tref} \cle \|q_h\|_{0,\Tref}$, see Corollary 3.4 in \cite{MR2609313}. As $q_h \in L^2_0(\Tref)$ we see that $\vu_{h}^1 \in \bpoln^k(\Tref)$ so we apply the extension operator of theorem \ref{lbb:h2ext} to define $\vu_{h}^2:= \curl{\ext(\vu_{h}^1)}$. By that we get for $\vu_h := \vu_{h}^1 - \vu_{h}^2$ and using property \eqref{lbb:h2ext:propA}
  \begin{align*}
    \div{\vu_h} = \div{\vu_{h}^1} - \div{\vu_{h}^2} = q_h  \textrm{ in } \Tref \quad \textrm{and} \quad \vu_h \cdot n = \vu_{h}^1 \cdot n - \vu_{h}^2 \cdot n =  0 \textrm{ on } \partial \Tref.
  \end{align*}
  Together with \eqref{lbb:h2ext:propC} and \eqref{lbb:h2ext:propB}
  %which now reads as
%  \begin{align*}
%    \|(\vu_{h}^1 - \vu_{h}^2) \cdot \tau \|_{0,\partial \Tref} = \|(\vu_{h}^1 + \vu_{h}^2) \cdot \tau \|_{0,\partial \Tref} \cle \frac{1}{k} \| \vu_h \|_{1},
%  \end{align*}
  we get
  \begin{align*}
    \|\vu_h\|^2_{1_h,\Tref} &=\|{\nabla \vu_{h}^1 - \nabla \vu_{h}^2}\|^2_{0,\Tref} +  \sum\limits_{E \in \partial \Tref} k^2  \| (\vu_{h}^1 - \vu_{h}^2) \cdot \tau \|^2_{0,E} \cle \|\nabla \vu_{h}^1\|^2_{0,\Tref} + \|{\nabla \vu_{h}^2}\|^2_{0,\Tref} +\| \vu_h \|_{1,\Tref}^2 \\ 
                      &\cle \| \vu_h \|_{1,\Tref}^2 + \|{\nabla \vu_{h}^2}\|^2_{0,\Tref}  = \| \vu_h \|_{1,\Tref}^2 + | \ext(\vu_{h}^1) |^2_{2,\Tref} \cle  \| \vu_h \|_{1,\Tref}^2  \cle  \|q_h\|_{0,\Tref}^2.
  \end{align*}
As $\vu_h \in \widehat{\vV}_{h,0}$ we bound the supremum from below thus
  \begin{align*}
  \sup\limits_{\vO \neq \vv_h \in \widehat{\vV}_{h,0}} \frac{b(\vv_h, q_h)}{\|\vv_h\|_{1_h,\Tref}} \cge \frac{\int_{\Tref} \div{\vu_h} ~{q_h} \intd x }{\|\vu_h\|_{1_h,\Tref}} \cge \frac{ \|q_h\|_{0,\Tref}^2}{ \|q_h\|_{0,\Tref}} \cge  \|q_h\|_{0,\Tref}.
  \end{align*}
\end{proof}
\newline
\begin{proof}[Proof of theorem \ref{dis::maintheorem}] For the proof we assume that $k \ge 2$. For the analysis of the lowest order case we refer to \cite{Lehrenfeld:thesis}. 
  %The idea of the proof is well known and can be found for example  in \cite{lederer:2016} Theorem 5.3,  or in \cite{opac-b1127030} Theorem 5.44.
  %Let $q_h \in \widehat{Q}_h$ be an arbitrary function.
  We construct a Fortin operator $\fortin$ with 
  \begin{align}\label{fortin}
    b(\fortin \vu -\vu, q_h) = 0 \quad \forall q_h \in Q_h \quad \textrm{and} \quad \dgnorm{\fortin \vu } \le c_{F} \|\vu \|_1
  \end{align}
  where  $c_{F}$ is a robust constant with respect to $h$ and $k$. Then theorem \ref{dis::maintheorem} follows from  preposition 5.4.2 in \cite{brezzifortin}. We define the operator $\fortin$ as the sum of a low order operator $\fortin^1$ and a correction operator $\fortin^2$. The first operator is the standard operator for the  pair $\left( \bpol^2(\Tc) \cap [C^0(\Omega)]^2 \right) \times \pol^0(\Tc)$, see chapter 8.4 in \cite{brezzifortin}, which is uniformly continuous in $h$. For the second operator let $(\vw_h^T,r_h^T)$ be the solution of the local correction problem 
    \begin{align*}
    \begin{array}{rcll}
      a^T(\vw_h^T ,\vv_h) + b^T(\vv_h, r_h^T) &=& 0  \quad &\forall \vv_h  \in  \vV_h(T) \\
      b^T(\vw_h^T, q_h) & = & \int_T \div{(\vu - \fortin^1 \vu)} q_h \intd x  \quad & \forall q_h  \in  Q_h(T)
    \end{array}
    \end{align*}
    where $a^T(\cdot,\cdot)$ and $b^T(\cdot,\cdot)$ are the restrictions of the bilinear-forms \eqref{bilinearform::a} and \eqref{bilinearform::b}  on each triangle $T \in \Tc$ and $\vV_h(T)$ and $Q_h(T)$ are the corresponding local spaces. As $\div{(\vu - \fortin^1 \vu)} \perp \pol^0(\Tc)$ we get $\vw^T_h \cdot n = 0$ on $\partial T$ and together with the local LBB-condition theorem \ref{lbb::locallbbtheorem}, a scaling argument and the stability result of saddle point problems we furthermore get
    \begin{align*}
      \dgnorm{\vw^T_h} \cle \| \div{(\vu - \fortin^1 \vu)} \|_0 \le c_{2} \| \vu \|_1,
    \end{align*}
    where the constant $c_{2}$ is independent of $h$ and $k$. Now we set $\fortin^2 \vu := \sum\limits_{T \in \Tc} \vw^T_h$ and see that $\fortin = \fortin^1 + \fortin^2$ fullfills  \eqref{fortin} as for all $T \in \Tc$ it holds
    \begin{align*}
      \div{\fortin \vu}|_T &= \div{\fortin^1 \vu}|_T +  \div{\fortin^2 \vu}|_T = \div{\fortin^1 \vu}|_T +  \sum\limits_{T \in \Tc} \projq (\div{\vu}|_T - \div{\fortin^1 \vu}|_T) = \projq \div{\vu}|_T, \\
      \dgnorm{\fortin \vu } &\le \dgnorm{\fortin^1 \vu } + \dgnorm{\fortin^2 \vu } \cle \| \vu \|_1 + \sum\limits_{T \in \Tc}\dgnorm{\vw^T_h} \cle \| \vu \|_1.
    \end{align*}
  %  and $\fortin^2:= \sum\limits_{T \in \Tc} \fortin^{2,T}$. to seek for a low order approximation and defines an high order correction afterwards. Using the local LBB condition, theorem \ref{lbb::locallbbtheorem}, and some standard scaling arguments lead then to the estimate \eqref{dis::maintheorem::estimation}.
\end{proof}
\begin{remark}
In the sense of remark \ref{remark::local} one can choose the polynomial order in theorem \ref{lbb:h2ext} as $k_T$, thus to show theorem \ref{dis::maintheorem} one can use the local LBB-condition theorem \ref{lbb::locallbbtheorem} with the proper order.
\end{remark}
\begin{remark}
As mentioned in remark 5.2.7 in \cite{brezzifortin} the existence of a Fortin operator $\fortin$ can be used to construct an error estimation independent of the LBB-constant $\beta_\infty$ of the infinite dimensional Stokes problem \eqref{int::stokesequations}. This may be an advantage for problems where $\beta_\infty$ is large.
\end{remark}
%%% Local Variables: 
%%% mode:latex
%%% TeX-master: "main"
%%% End:

\section{Numerical examples}

Theorem \ref{dis::maintheorem} shows that the LBB-constant is independent of the polynomial order $k$. Beside the analysis in section~\ref{sec:highorderinfsup} numerical tests also show this independence. In Table \ref{lbb::numone} one sees the different values of $\beta$ on the reference triangle $\Tref$ for different polynomial orders $k$. Where the first line supports our results, the second line predicts that the polynomial robustness seems to hold also true in the case of quadrilaterals.\newline \newline
In the second example we take a closer look on the stability of the method introduced in section~\ref{sec:discretization}. For this we solve problem \eqref{dis::stokesproblem} on the unit square $\Omega=(0,1)^2$ where the exact solution is given by $\vu:=\curl(\sin(x)^2\sin(y)^2)$, $p=0$ and set $\nu=1$. For that we use a triangulation with $52$ elements with $h \approx 0.2$ and a stabilization parameter $\alpha = 4$.
% where the $c_{best}$ depends on the constants $\alpha_1, \alpha_3$ and the inverse LBB-constant $1/\beta$.
%The value of $c_{best}$ can now be interpreted as how much influence the in compressibility constraint $b(\vu_h, q_h)=0~ \forall q_h \in Q_h$ for the solution $\vu_h$ had compared to the best approximation on the whole space $\vV_h$.
In Table \ref{lbb::numtwo} we see the behavior of the error $\dgnorm{\vu-\vu_h}$, the error of the best approximation $\dgnorm{\vu-\vu_{best}}$ with $\vu_{best} := \arg \min\limits_{\vv_h \in \vV_h} \dgnorm{\vu-\vv_h}$ and the ratio $\dgnorm{\vu-\vu_h} / \dgnorm{\vu-\vu_{best}}$. The values support that the discrete Stokes solution is close to the best approximation as the ratio is practically close to one.
\begin{table}
  \centering
  \begin{tabular}{rcccc}
    \hline
    k & 4 & 8 & 16 & 32 \\
    \hline
    triangle & 0.167 & 0.190 & 0.201 & 0.205 \\
    quadrilateral & 0.305 & 0.313 & 0.315 & 0.315 \\   
    \hline
  \end{tabular}
  \caption{LBB constant in dependence of $k$ on the refernce triangle $\Tref$ \\and the reference quadrilateral $\widehat{Q}$.} \label{lbb::numone}
\end{table}

\begin{table}
  \centering
  \setlength{\tabcolsep}{7pt}
  \begin{tabular}{ccccccccc}
\hline
$k$&$2$&$4$&$6$&$8$\\
\hline
$\dgnorm{\vu-\vu_h}$&$1.587$&$3.343e-02$&$3.430e-04$&$2.258e-06$\\
\hline
$\dgnorm{\vu-\vu_{best}}$&$1.180$&$2.302e-02$&$2.309e-04$&$1.597e-06$\\
\hline
ratio&$1.344$&$1.452$&$1.486$&$1.414$\\
\hline
\end{tabular}
\caption{The error $\dgnorm{\vu-\vu_{best}}$, $\dgnorm{\vu-\vu_h}$ and the ratio for different $k$.} \label{lbb::numtwo}
\end{table}

%%% Local Variables: 
%%% mode:latex
%%% TeX-master: "main"
%%% End:

\section{$H^2$ stable polynomial extension}
\label{sec:h2extension} In this chapter we prove the existence of a polynomial preserving $H^2$-stable extension operator. Note that in the two dimensional case the $\curl$ operator can be represented as a rotated gradient, thus on the boundary we have
\begin{align*}
\curl{\vu_h} \cdot \vn = \nabla^\perp \vu_h \cdot \vn = \nabla \vu_h \cdot \tau \quad \textrm{on } \partial T.
\end{align*}
For the ease of notation and readability, we switch to the gradient and change the tangential and normal vector in theorem \ref{lbb:h2ext}. Furthermore we skip the subscript $h$ of $\vu_h$, thus we write $\vu$ for a vectorial polynomial, and show instead:
\begin{theorem}[$H^2$-extension] \label{ext:h2ext}
   For every $k$ there exists an operator $\ext : \bpolt^k(\Tref) \rightarrow \pol^{k+1}(\Tref)$ such that for $\vu  \in \bpolt^k(\Tref)$ it holds 
  \begin{align}
    &\nabla {\ext(\vu)} \cdot \tau = \vu \cdot \tau \quad \textrm{on } \partial \Tref  \label{ext:h2ext:propA} \\
    & \|(\vu - \nabla \ext(\vu)) \cdot \vn \|_{0,\partial \Tref} \cle \frac{1}{k} \| u \|_{1, \Tref} \label{ext:h2ext:propB} \\
    & \| \ext(\vu) \|_{2,\Tref} \cle \| \vu \|_{1,\Tref} \label{ext:h2ext:propC}
  \end{align}
\end{theorem}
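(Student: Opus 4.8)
The plan is to build the scalar polynomial $\phi := \ext(\vu) \in \pol^{k+1}(\Tref)$ by prescribing its trace and its normal derivative on $\partial\Tref$, and then invoking a polynomial-degree-preserving $H^2$-lifting of that boundary data.

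First I would construct the trace $g := \phi|_{\partial\Tref}$. Because $\vu \in \bpolt^k(\Tref)$ is exactly the condition $\int_{\partial\Tref}\vu\cdot\tau = 0$, the datum $\vu\cdot\tau$ possesses a single-valued antiderivative along $\partial\Tref$: fixing $g(0,0) = 0$ and integrating $\vu\cdot\tau$ edge by edge around $\partial\Tref$ yields a continuous, piecewise-degree-$(k+1)$ function $g$, single-valued precisely because of the zero-mean condition. This choice already forces \eqref{ext:h2ext:propA}. I would then show that $g$ lies in the corner-adapted trace space of $\partial\Tref$ with a bound $\cle \|\vu\|_{1,\Tref}$; the weighted norms $\|\cdot\|_{0^{*},E_i}$ and $\|\cdot\|_{1/2^{*},E_i}$ of the Preliminaries are tailored for exactly this, the estimate combining a standard trace inequality $\|\vu\cdot\tau\|_{1/2,E_i}\cle\|\vu\|_{1,\Tref}$ with an inverse/Hardy-type bound near the vertices that exploits the polynomial structure to absorb the corner weight.

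Next I would fix the normal data $\psi_i$ on each edge $E_i$, a polynomial of degree $\le k$ that is $L^2$-close to $\vu\cdot\vn|_{E_i}$. The point is that the data $(g,\psi_1,\psi_2,\psi_3)$ must satisfy vertex-compatibility conditions --- symmetry of the would-be Hessian at each vertex --- for any polynomial with these traces to exist, and the raw choice $\psi_i = \vu\cdot\vn|_{E_i}$ violates these at second order at the vertices (essentially because $\curl\vu$ need not vanish there). I would therefore take $\psi_i$ to be $\vu\cdot\vn|_{E_i}$ modified by a correction from a $\pol^{k}_{00}$-type space concentrated near the two endpoints of $E_i$, chosen to enforce the compatibility relations; a scaling argument shows such a correction is localized in a corner layer of width $\mathcal{O}(k^{-2})$ and has $\|\cdot\|_{0,E_i}\cle \frac1k\|\vu\|_{1,\Tref}$, which yields \eqref{ext:h2ext:propB}. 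It is here that the tangential freedom of $\vu$ across true element edges matters: such local corrections are admissible without imposing any global compatibility on the original data.

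The heart of the proof is then the polynomial $H^2$-lifting itself: given $g$ and $\{\psi_i\}$ satisfying the vertex-compatibility conditions, produce $\phi \in \pol^{k+1}(\Tref)$ with $\phi|_{\partial\Tref} = g$, $\nabla\phi\cdot\vn|_{E_i} = \psi_i$ and $\|\phi\|_{2,\Tref}\cle$ (weighted trace norms of $g$ and the $\psi_i$) $\cle\|\vu\|_{1,\Tref}$. I would assemble this operator edge by edge: a one-edge lifting reproducing the Dirichlet and Neumann data on $E_i$ while vanishing to the appropriate order at the two opposite corners, obtained by pulling the reference square back onto $\Tref$ through the Duffy (collapsed-coordinate) map and writing an explicit tensor-product extension on the square in terms of integrated Jacobi polynomials $\intjacobi$; summing the three edge liftings and subtracting the over-counted vertex terms gives $\phi$. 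The collapsed coordinates generate precisely the weight $\frac1x + \frac1{1-x}$, which is why the weighted norms of the Preliminaries are the right currency for its stability. Finally, setting $\ext(\vu) := \phi$: \eqref{ext:h2ext:propA} holds by construction, \eqref{ext:h2ext:propB} by the preceding paragraph, and \eqref{ext:h2ext:propC} by chaining the lifting bound with the trace estimates for $g$ and $\psi_i$; linearity of $\ext$ is inherited step by step. I expect the decisive difficulty to be this last construction together with the sharp corner analysis --- keeping the lifting polynomial-degree preserving and stable in the corner-weighted norms, discharging the vertex-compatibility conditions, and tracking the $k^{-1}$ gain in \eqref{ext:h2ext:propB} --- whereas the antiderivative $g$, the trace inequalities, and the assembly are routine scaling and approximation bookkeeping.
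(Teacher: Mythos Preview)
Your high-level plan --- recover the trace by antidifferentiating $\vu\cdot\tau$, adjust the normal data at the corners, then apply a polynomial $H^2$-lifting --- matches the paper's strategy, but the execution differs in two notable ways.

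First, the paper works \emph{sequentially} rather than lifting $(g,\psi_1,\psi_2,\psi_3)$ all at once. It builds a tangential extension $\exttang(\vu)$ by explicit averaging formulas of the type $\int_0^1\psi(x+sy)\,ds$ (with edge-by-edge corrections), proving $H^2$-stability via the $K$-functional and Sobolev--Slobodeckij seminorm manipulations; only \emph{after} this does it look at the normal residual $\vu_c\cdot\vn$, which then automatically has single zeros at the vertices. A separate normal extension $\extnorm$ is applied to the part with \emph{double} vertex zeros, again via explicit averaging plus cubic blending polynomials. There is no Duffy map and no tensor-product construction; the integrated Jacobi polynomials appear in a different place (see below). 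Your simultaneous Duffy/tensor-product lifting is a reasonable alternative, but you would be building the $H^2$-lifting from scratch, and the paper explicitly remarks that no such polynomial $H^2$-extension was previously known.

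Second --- and this is where your sketch is thinnest --- the $1/k$ gain in \eqref{ext:h2ext:propB} is not a ``scaling argument'' in the paper. It rests on a sharp two-sided result (Theorem~\ref{ext:dividetheorem}): for $u\in\pol^k(E_1)$ with $u=0$ on $\partial E_1$ one has $|u'(1)|\cle k^2\|u\|_{1/2^*,E_1}$, and conversely there exists $e\in\pol^k(E_1)$ with $e'(1)=1$, $e(0)=e'(0)=e(1)=0$, $\|e\|_{1/2^*,E_1}\cle k^{-2}$ and $\|e\|_{0,E_1}\cle k^{-3}$. The function $e$ is obtained by explicitly solving a weighted $H^1$-minimisation in the integrated-Jacobi basis with a Lagrange multiplier. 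The ``bad'' part of the normal residual is then $u'(1)e_1+u'(0)e_0$, whose $L^2$-norm is $\cle k^2\cdot k^{-3}\|\vu\|_{1,\Tref}$, giving the factor $1/k$. Note also that your phrasing is inverted: the correction functions $e_0,e_1$ are \emph{not} in $\pol^k_{00}$ --- subtracting them is precisely what lands the remainder in $\pol^k_{00}$, which is the hypothesis needed for $\extnorm$ to be polynomial-preserving.
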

\begin{proof}
  The proof is provided in section~\ref{ext::proofmaintheorem}.
\end{proof}
%\begin{remark}
%  Note that in theorem \ref{ext:h2ext} only the tangential derivative is approximated exactly. The normal derivative converges to the exact value when the polynomial degree is increased. This goes hand in hand with the theory presented in \cite{bernardi1992} where compatibility conditions are presented to assure an $H^2$ extension. As $\vV_h$ {\it allows} tangential jumps across edges those compatibility conditions are less restrictive thus an $H^2$ extension is still possible.
%\end{remark}
\subsection{Literature and structure of this chapter}
Extension or lifting operators are a well discussed topic as they present the inverse map of trace operators which are known to be continuous, for example in the scalar case from $H^1(T)$ onto $H^{1/2}(\partial T)$. The challenge then is to construct an operator that maps functions from $H^{1/2}(\partial T)$ into $H^1(T)$. Furthermore a polynomial extension has the additional property that the operator maps a given polynomial on the boundary onto a polynomial on the element. The importance of such extensions have their origin mostly in the analysis of $p$- and $hp$- finite element methods, see for example in \cite{MR2034876}, spectral methods and preconditioning as for example in the work of \cite{MR2387903}. Although the existence  of polynomial $H^1$-, $H(\div{})$- and $H(\curl{})$-extensions are already well analysed, to the best of our knowledge a stable polynomial $H^2$-extension, which is presented in this chapter, is the first result of this kind. Beside the application of the $H^2$-stable extension operator in the proof of the local LBB condition theorem \ref{lbb::locallbbtheorem}, this operator may be of interest for example to construct proper interpolation operators for $4^{th}$ order $C^0$-continuous DG methods, see for example the works of \cite{MR2670114, MR3022211}. \newline Before we present our results we want to mention some literature as many techniques we use are motivated by their accomplishments. The first work we consider is the pioneering paper of \cite{Babuska1987} which contains major ideas that are developed and adapted in later contributions including the technique of splitting the operator in primary and proper correction liftings. For polynomial preserving extensions in two dimensions we want to mention the work of \cite{MR1055459,MR1076961} and for three dimensions \cite{BENBELGACEM1994235,MR1445738} and more recently \cite{MR2523203}. Finally we want to mention the works of \cite{MR2439500, MR2551195,MR2904580} where the polynomial extensions also provide a commuting diagram property for the corresponding spaces. 
\newline \newline For the proof of theorem \ref{ext:h2ext} we proceed in several steps. We start with an extension operator for the tangential values in section~\ref{ext::sectiontangext} to provide the properties \eqref{ext:h2ext:propA} and \eqref{ext:h2ext:propB}. After that we show in section~\ref{ext::sectionnormalgext} how to proceed with the normal component. For this we assume that the polynomial on the edge has a zero of order two in the vertices to show an estimate in the $H^{1/2}_{00}$-norm. To lift the normal trace also for arbitrary polynomials we show in section~\ref{ext::sectiondivide} that the error, hence the part of the polynomial which does not satisfy the assumptions needed for the extension before, is bounded with a proper order of $k$ to show \eqref{ext:h2ext:propB}. Finally we combine all estimates in section~\ref{ext::proofmaintheorem} to prove theorem \ref{ext:h2ext}. 
\begin{remark}
In the following proofs we only present the major techniques to show the $H^2$-continuity for the most difficult terms due to the similar structure of the rest. For a more detailed analysis we refer to \cite{lederer:2016}, chapter 6.
\end{remark}
\subsection{Tangential extension} \label{ext::sectiontangext}
\begin{theorem} \label{ext:tangh2ext}
  For every $k$ there exists an operator $\exttang : \bpolt^k(\Tref) \rightarrow \pol^{k+1}(\Tref)$ such that for $\vu  \in \bpolt^k(\Tref)$ it holds 
  \begin{align}
    &\nabla {\exttang(\vu)} \cdot \tau = \vu \cdot \tau \quad \textrm{on } \partial \Tref  \label{ext:h2exttang:propA} \\
    & \| \exttang(\vu) \|_{2,\Tref} \cle \| \vu \|_{1,\Tref} \label{ext:h2exttang:propC}
  \end{align}
\end{theorem}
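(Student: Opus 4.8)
The plan is to construct $\exttang$ as the sum of a fixed, $k$–independent low–order lifting that takes care of the three vertices and of three edge corrections, and to use that the edges of $\Tref$ are interchanged by affine maps, so that only one model edge, say $E_1$, has to be treated. The starting observation is that for $\vu\in\bpolt^k(\Tref)$ the datum $\vu\cdot\tau$ has zero mean over $\partial\Tref$, hence its arclength antiderivative is a single–valued, continuous, edgewise polynomial potential $w$ of degree $k+1$ on $\partial\Tref$ with $\partial_\tau w=\vu\cdot\tau$ on every edge; consequently any $\psi\in\pol^{k+1}(\Tref)$ with $\psi|_{\partial\Tref}=w$ already satisfies \eqref{ext:h2exttang:propA}, and the whole difficulty is to exhibit one such $\psi$ with the $k$–robust bound \eqref{ext:h2exttang:propC}. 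Note that $w$ is in general \emph{not} in $H^{3/2}(\partial\Tref)$ — its tangential derivative jumps at the vertices — so \eqref{ext:h2exttang:propC} cannot come from a bounded right inverse of an $H^{3/2}$ trace operator: the corner jumps have to be absorbed into the (otherwise free) normal trace of $\psi$, which is precisely the freedom a normal–continuous ansatz provides and a $C^0$ velocity ansatz would lack.

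First I would remove the vertex values. Each value $w(a_j)$ is a sum of edge integrals of $\vu\cdot\tau$, so $|w(a_j)|\cle\|\vu\cdot\tau\|_{0,\partial\Tref}\cle\|\vu\|_{1,\Tref}$ by the trace theorem, and the $\pol^1(\Tref)$ Lagrange interpolant $\psi_0$ of the three numbers $w(a_j)$ obeys $\|\psi_0\|_{2,\Tref}\cle\|\vu\|_{1,\Tref}$ with a constant independent of $k$, while $w-\psi_0|_{\partial\Tref}$ vanishes at all three vertices. Writing this residual as $\sum_{i=1}^{3}w_i$ with $w_i\in\pol^{k+1}(E_i)$ supported on $E_i$ and vanishing at its endpoints, it remains to build, for data $g\in\pol^{k+1}(E_1)$ with $g(0)=g(1)=0$, a polynomial lifting $Lg\in\pol^{k+1}(\Tref)$ with $(Lg)|_{E_1}=g$, $(Lg)|_{E_2}=(Lg)|_{E_3}=0$ and a $k$–uniform estimate of $\|Lg\|_{2,\Tref}$ by a weighted $H^{3/2}$–type norm of $g$ on $E_1$, the natural higher–order analogue of the $\|\cdot\|_{0^*,E_1}$ and $\|\cdot\|_{1/2^*,E_1}$ norms of the preliminaries. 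A convenient concrete form is $Lg=\lambda_2\lambda_3\,q$ with $\lambda_2,\lambda_3$ the barycentric coordinates vanishing on $E_2,E_3$ and $q\in\pol^{k-1}(\Tref)$ a suitably weighted, explicit $H^2$–stable extension of $g/(\lambda_2\lambda_3)|_{E_1}$, built one–dimensionally in the edge variable and with Jacobi polynomials in the transversal variable. I expect the construction of this edge lifting and the verification of its $k$–uniform $H^2$ bound — choosing the weight near the endpoints of $E_1$ so that the second derivatives stay square integrable — to be the main obstacle; it is the $H^2$ counterpart of the classical $H^1$ polynomial liftings and is what forces the weighted edge norms.

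Finally I would set $\exttang(\vu):=\psi_0+\sum_{i=1}^{3}L(w_i)$, which maps into $\pol^{k+1}(\Tref)$ and has trace $w$ on $\partial\Tref$, hence satisfies \eqref{ext:h2exttang:propA}, and estimate $\|\exttang(\vu)\|_{2,\Tref}\le\|\psi_0\|_{2,\Tref}+\sum_{i}\|L(w_i)\|_{2,\Tref}\cle\|\vu\|_{1,\Tref}$ by collecting the bounds above. The last step needed is to pass from the weighted edge norms of the $w_i$ back to $\|\vu\|_{1,\Tref}$: the $H^{1/2}$–seminorm part is $\cle|\vu\cdot\tau|_{1/2,\partial\Tref}\cle\|\vu\|_{1,\Tref}$ by the trace theorem, whereas the weighted $L^2$ parts require a $k$–robust polynomial inverse estimate bounding the weighted $L^2$–norm of $\vu\cdot\tau$ on each edge, after removing its endpoint values, by its $H^{1/2}$–seminorm on that edge. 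That polynomial estimate is the second technical ingredient I would have to prove, and together with the edge lifting it gives the claim.
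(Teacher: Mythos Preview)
Your high--level reduction is correct and even anticipates the right subtlety: since $\int_{\partial\Tref}\vu\cdot\tau=0$, the arclength antiderivative $w$ is a single--valued edgewise polynomial of degree $k+1$, and any $\psi\in\pol^{k+1}(\Tref)$ with $\psi|_{\partial\Tref}=w$ gives \eqref{ext:h2exttang:propA}; the corner incompatibilities of $w$ are indeed absorbed by the free normal trace. Your decomposition (linear vertex interpolant $\psi_0$ plus three edge liftings of $w_i$ vanishing at the endpoints) is a legitimate alternative to the paper's route. The genuine gap is that you defer precisely the step that carries all the content: the construction of the edge lifting $L$ with a $k$--uniform $H^2$ bound. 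Writing $Lg=\lambda_2\lambda_3\,q$ shifts the problem to a weighted extension of $g/(\lambda_2\lambda_3|_{E_1})$, and you correctly flag this as ``the main obstacle'' --- but that obstacle \emph{is} the theorem. As stated, the proposal is an outline, not a proof.

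By contrast, the paper gives a fully explicit construction that avoids both of your anticipated technical ingredients. It never needs a weighted $H^{3/2}$--type edge norm nor any polynomial inverse estimate for the \emph{tangential} extension: the basic lift is the averaging operator $\exttang_1(\vu)(x,y)=\int_0^1\psi(x+sy)\,ds$ with $\psi(x)=\int_0^x u_\tau$, and its $H^2$--bound is obtained directly from the ordinary $H^{1/2}$ norm of $u_\tau$ via Peetre's $K$--functional (two representations of $\partial_{xx}\exttang_1$, one controlled by $\|u_\tau'\|_0$ and one by $\|u_\tau\|_0$, interpolate to $\|u_\tau\|_{1/2}$). Zero tangential trace on the remaining edges is then enforced \emph{sequentially} by explicit polynomial corrections (Steps~2 and~3), each again bounded only by $\|u_\tau\|_{1/2,E_i}\cle\|\vu\|_{1,\Tref}$ from the standard trace theorem. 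The weighted norms $\|\cdot\|_{0^*}$ and $\|\cdot\|_{1/2^*}$ you are reaching for enter only later, in the \emph{normal} extension (Theorem~\ref{ext:normh2ext}), not here. So while your plan could likely be completed, it introduces weighted--norm machinery that the tangential step does not actually require; the paper's explicit averaging construction is both simpler and self--contained.
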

\begin{proof}
  For the proof we proceed in several steps. First we construct an extension from the lower edge $E_1$ onto the triangle $\Tref$ without any restrictions on the values on the two other edges $E_2$ and $E_3$. After that we construct two more extensions with proper values on the other edges and combine them afterwards to define $\exttang$.
\begin{center}
\it Step 1
\end{center}
For $u_\tau(x) := \vu(x,0) \cdot \tau$, where $\tau := (1,0)^t$ is the tangential vector on the lower edge $E_1$, we define 
\begin{align} \label{ext::tangextdefstepone}
\psi(x) := \int_0^x u_\tau(s) \intd s \qquad \textrm{and} \qquad \exttang_1(\vu)(x,y) := \int_0^1 \psi(x + sy) \intd s.
\end{align}
Note that the derivations read as 
\begin{align*}
\exttang_{1,x}(\vu) = \int_0^1 \vut(x+sy) \intd s \quad \textrm{and} \quad  \exttang_{1,y}(\vu) = \int_0^1 \vut(x+sy)s \intd s,
\end{align*}
thus
\begin{align*}
  \nabla \exttang_1(\vu) \cdot \tau = \int_0^1 \vut(x) \intd s = \vu \cdot \tau \quad \textrm{on } E_1.
\end{align*}
Next we show the $H^2$ continuity, so $\| \exttang_1(\vu) \|_{0,\Tref}^2 + \| \nabla \exttang_1(\vu) \|_{1,\Tref}^2 \cle \| \vu \|_{1,\Tref}^2$. To derive the estimate of the $L^2$ norm we define for a fixed $y \in [0,1]$ the line $l_y := \{(x,y): x \in [0,1-y] \}$ and use the Cauchy Schwarz inequality to get
\begin{align*}
  \| \exttang_1(\vu) \|_{0,l_y}^2 &= \int_0^{1-y}\left(  \int_0^1 \psi(x + sy) \intd s\right)^2\intd x \cle \int_0^{1-y} \int_0^1 \psi(x + sy)^2 \intd s \intd x.                                 
\end{align*}
With the substitution $t = x+sy$ and Fubini's theorem this leads to
\begin{align*}
  \| \exttang_1(\vu) \|_{0,l_y}^2 & \cle \frac{1}{y} \int_0^{1-y} \int_x^{x+y} \psi(t)^2 \intd t \intd x = \frac{1}{y}\iint\limits_{\substack{0 \le x \le 1-y \\ x \le t \le x+y }} \psi(t)^2 \intd(x,t) \cle \frac{1}{y}\iint\limits_{\substack{0 \le t \le 1 \\ t-y \le x \le t }} \psi(t)^2 \intd(x,t) \\
  & = \frac{1}{y} \int_0^{1-y} \psi(t)^2 \underbrace{\int_{t-y}^t \intd x}_{=y} \intd t \cle \| \psi \|_{0,E_1}^2 \cle \| \vut \|_{0,E_1}^2 \cle \| \vut \|_{1/2,E_1}^2,
\end{align*}
thus
\begin{align*}
\| \exttang_1(\vu) \|_{0,\Tref}^2 = \int_0^1   \| \exttang_1(\vu) \|_{0,l_y}^2 \intd y \cle \| \vut \|_{1/2,E_1}^2  \cle  \| \vu \|_{1,\Tref}^2.
\end{align*}
The estimate of the first order derivations is similar, so we get $ \| \nabla \exttang_{1}(\vu) \|_{0,\Tref}^2 \cle \| \vu \|_{1,\Tref}^2$. For the rest, thus the estimate of the second order derivations, we use the real method of interpolation of spaces, see \cite{berghint} and Peetre's K-functional technique, see \cite{MR0178381}. It is well known that we have an equivalent norm on the space $H^{1/2}(E_1)$ given by
\begin{align*} %\label{ext::kfunctional}
\| \vut \|_{1/2,E_1}^2 = \int_0^\infty y^{-2} |K(y,\vut)|^2 \intd y \quad \textrm{with} \quad K(y,\vut) := \inf\limits_{\substack{\vu_0, \vu_1 \\\vut = (\vu_0 + \vu_1) \cdot \tau } } \sqrt{\|u_0 \cdot \tau\|_{0,E_1}^2 + y^2 \|u_1 \cdot \tau\|_{1,E_1}^2}.
\end{align*}
%where the $K$-functional is defined by
To use this method we first calculate the second order derivation with respect to $x$
\begin{align*}
\exttang_{1,xx}(\vu) = \int_0^1 \vut'(x+sy) \intd s. %\quad \textrm{and} \quad \frac{\partial \exttang_{1,x}(\vu)}{\partial y}:= \exttang_{1,y}(\vu) = \int_0^1 \vut'(x+sy)s \intd s.
\end{align*}
The idea now is to find two different estimates of the $L^2$-norm of $\exttang_{1,xx}(\vu)$ and combine them using the definition of the $K$-functional. First we observe that again with similar techniques as in the beginning we bound the norm on $l_y$ by
\begin{align} \label{ext::exttangxxltwo}
\| \exttang_{1,xx}(\vu) \|_{0,l_y}^2 \cle  \| \vut' \|_{0,E_1}^2.
\end{align}
Next we use a different representation of $\exttang_{1,xx}(\vu)$ by using the identity $\vut'(x+sy) = \frac{1}{y} \frac{d}{ds} \vut(x+sy)$ and integration by parts
\begin{align*}
\exttang_{1,xx}(\vu)  = \int_0^1 \vut'(x+sy) \intd s = \underbrace{\frac{1}{y} \int_0^1 \vut(x+sy) \intd s}_{=:A_1} + \underbrace{\frac{1}{y} ( \vut(x+y) - \vut(x))}_{=:B_1}.
\end{align*}
For $A_1$ we proceed as before to get $\| A_1 \|_{0,l_y}^2 \cle \frac{1}{y^2} \| \vut \|^2_{0,E_1}$, and for $B_1$ we get with $t= x+y$
\begin{align*}
  \| B_1 \|_{0,l_y}^2 &= \frac{1}{y^2} \int_0^{1-y} ( \vut(x+y) - \vut(x))^2 \intd x \cle \frac{1}{y^2} \int_0^{1-y} \vut(x+y)^2 + \vut(x)^2 \intd x \\
                    &\cle \frac{1}{y^2} \int_y^{1} \vut(t)^2 \intd t  + \frac{1}{y^2}\int_0^{1-y}  \vut(x)^2 \intd x \cle \frac{1}{y^2} \| \vut \|^2_{0,E_1}.
\end{align*}
Together with \eqref{ext::exttangxxltwo} we have
\begin{align*}
  \| \exttang_{1,xx}(\vu)  \|^2_{0,l_y} &\le \inf\limits_{\substack{\vu_0, \vu_1 \\\vut = (\vu_0 + \vu_1) \cdot \tau } } \| \exttang_{1,xx}(\vu_0)  \|^2_{0,l_y} + \| \exttang_{1,xx}(\vu_1)  \|^2_{0,l_y} \\
  &\cle \inf\limits_{\substack{\vu_0, \vu_1 \\\vut = (\vu_0 + \vu_1) \cdot \tau } } \frac{1}{y^2} \| u_0 \cdot \tau  \|^2_{0,E_1} + \| (u_1 \cdot \tau)' \|^2_{0,E_1}   \cle \frac{1}{y^2} K(y,\vut)^2, 
\end{align*}
and thus
\begin{align*}
  \| \exttang_{1,xx}(\vu)  \|_{0,\Tref}^2  &= \int_0^1 \| \exttang_{1,xx}(\vu)  \|_{0,l_y}^2 \intd y \cle \int_0^1 \frac{1}{y^2} K(y,\vut)^2 \intd y \\
  & \cle \int_0^\infty \frac{1}{y^2} K(y,\vut)^2 \intd y = \| \vut \|_{1/2,E_1}^2.
\end{align*}
For the other second order derivations we proceed similarly. All together we have
\begin{align} \label{ext::tangextsteponeest}
  % \| \exttang_{1}(\vu) \|_{0,\Tref}^2 + \| \exttang_{1,x}(\vu) \|_{1,\Tref}^2 + \| \exttang_{1,y}(\vu) \|_{1,\Tref}^2 \cle \| \vut \|_{1/2,E_1}^2 \cle \| \vu \|_{1,\Tref}^2.
  \| \exttang_{1}(\vu) \|_{2,\Tref}^2 \cle \| \vut \|_{1/2,E_1}^2 \cle \| \vu \|_{1,\Tref}^2.
\end{align}
\begin{center}
\it Step 2
\end{center}
In the next step we want to construct an extension from the lower edge $E_1$ with the restriction that the gradient of the extension has zero tangential values on the edge $E_2$. Similar to before we define for $u_\tau(x) := \vu(x,0) \cdot \tau$ on $E_1$ 
\begin{align*}
\psi(x) := \int_0^x u_\tau(s) \intd s - \overline{\psi(x)} \qquad \textrm{with} \qquad \overline\psi(x) := \int_0^1 u_\tau(s) \intd s,
\end{align*}
and
\begin{align} \label{ext::tangextdefsteptwo}
  \exttang_2(\vu)(x,y) := \int_0^1 \psi(x + sy) \intd s - \frac{y}{1-x} \int_0^1 \psi(x+s(1-x)) \intd s.
\end{align}
Using integration by parts for the second integral we furthermore use the representation
\begin{align} \label{ext::tangextdefsteptwoalt}
  \exttang_2(\vu)(x,y) &= \int_0^1 \psi(x + sy) \intd s + y \int_0^1 \psi'(x+s(1-x)) s \intd s - \overbrace{\psi(1)}^{=0} \nonumber\\
                       &= \int_0^1 \psi(x + sy) \intd s + y \int_0^1 u_\tau(x+s(1-x)) s \intd s.
\end{align}
On the edges $E_1, E_2$ we have
\begin{align*}
\exttang_2(\vu)|_{E_2} = 0 \Rightarrow \nabla \exttang_2(\vu) \cdot \tau|_{E_2} = 0 \quad \textrm{and} \quad \exttang_2(\vu) \cdot \tau|_{E_1} = \frac{\partial \psi}{\partial x} = u_\tau = \vu \cdot \tau|_{E_1}.
\end{align*}
For the $H^2$ continuity we proceed as before. Note that the first part of $\exttang_2(\vu)$ is the same as in step 1, so we only consider the second integral from \eqref{ext::tangextdefsteptwoalt}, thus the correction term
\begin{align*}
 \exttangc_2(\vu) := y \int_0^1 u_\tau(x+s(1-x)) s \intd s.
\end{align*}
% and show $\| \exttangc_2(\vu) \|^2_{0,\Tref} + \| \nabla \exttangc_2(\vu) \|^2_{1,\Tref} \cle \| \vu \|^2_{1,\Tref}$.
We present only the estimate for the second order derivative with respect to $x$ as some new techniques have to be used there. The rest follows with similar estimates. Using integration by parts for $\vut'$ we observe
\begin{align*}
   \exttangc_{2,xx}(\vu) &= \frac{y}{(1-x)^2} \int_0^1 \vut(x+s(1-x))(2s-1) \intd s \\
                                                                                        & +  \frac{y}{1-x} \int_0^1 \vut'(x+s(1-x))(2s-1) \intd s \\
                                                                                        &= \frac{y}{(1-x)^2} \int_0^1 \vut(x+s(1-x))(2s-1) \intd s \\
                                                                                        & +  \frac{y}{(1-x)^2} \int_0^1 \vut(x+s(1-x))(4s-3) \intd s  + \frac{y}{(1-x)^2} \vut(x),
\end{align*}
and by splitting the second integral into two terms finally
\begin{align*}
  \exttangc_{2,xx}(\vu) = \underbrace{\frac{y}{(1-x)^2} \int_0^1 \vut(x+s(1-x))(6s-3) \intd s}_{=:A_2} + \underbrace{ \frac{y}{(1-x)^2} \int_0^1 \vut(x) - \vut(x+s(1-x)) \intd s}_{=: B_2}.
\end{align*}
We start with the estimate of $A_2$. For this note that the polynomial $6s-3$ has a zero integral value on $E_1$, so we subtract the mean value $\overline{\vut(x)} := \frac{1}{1-x} \int_x^1 \vut(s) \intd s$, and get
%\begin{align}
%\overline{\vut(x)} := \frac{1}{1-x} \int_x^1 \vut(s) \intd s,
%\end{align}
\begin{align*}
  A_2 &= \frac{y}{(1-x)^2} \int_0^1 \vut(x+s(1-x))(6s-3) \intd s = \frac{y}{(1-x)^2} \int_0^1 (\vut(x+s(1-x)) - \overline{\vut(x)}) (6s-3) \intd s.
\end{align*}
Using the Cauchy Schwarz inequality and $\frac{y}{1-x} \le 1$ on $\Tref$ and the substitution $t = x + s(1-x)$ leads to 
\begin{align*}
  \| A_2 \|^2_{0,\Tref} &= \int_0^1 \int_0^{1-x} A_2^2 \intd y \intd x \\
                      & \cle  \int_0^1 \frac{1}{(1-x)^2} \left(  \int_0^1 (\vut(x+s(1-x)) - \overline{\vut(x)}) (6s-3) \intd s \right)^2 \int_0^{1-x}  \intd y \intd x  \\
                      & \cle \int_0^1  \frac{1}{1-x} \int_0^1 (\vut(x+s(1-x)) - \overline{\vut(x)})^2 \intd s \intd x \\
  & =  \int_0^1  \frac{1}{(1-x)^2} \int_x^1 (\vut(t) - \overline{\vut(x)})^2 \intd t \intd x. 
\end{align*}
For the next step we use the following identity for the inner integral
\begin{align*}
\int_x^1 (\vut(t) - \overline{\vut(x)})^2 \intd t  = \frac{1}{2(1-x)} \int_x^1 \int_x^1 (\vut(t) - \vut(s))^2 \intd t \intd s.
\end{align*}
Similar to step 1 we use now Fubini's theorem to handle the $(1-x)^3$ in the denominator, so
\begin{align*}
  \| A_2 \|^2_{0,\Tref} &\cle \int_0^1 \frac{1}{(1-x)^3}  \int_x^1 \int_x^1 (\vut(t) - \vut(s))^2 \intd t \intd s \intd x \\
                      & \cle \iiint\limits_{\substack{x \le s\\ x \le t \\ 0 \le s,t \le 1 }} \frac{(\vut(t) - \vut(s))^2 }{(1-x)^3} \intd (s,t,x) \cle \int_0^1 \int_0^1 \int_0^{\min(s,t)} \frac{1}{(1-x)^3} \intd x (\vut(t) - \vut(s))^2 \intd t \intd s.
\end{align*}
W.l.o.g assuming $s < t$ and using $1-s \ge t-s$ for $s < t < 1$, we see for the inner integral that 
\begin{align*}
 \int_0^{\min(s,t)} \frac{1}{(1-x)^3}\intd x = \left( \frac{1}{1-\min(s,t) } \right) ^2 - 1 \le \left( \frac{1}{1-s} \right)^2 \le \left ( \frac{1}{t-s} \right)^2,
\end{align*}
so together with the definition of the $H^{1/2}$ seminorm (see Sobolev Slobodeckij norm for example in \cite{opac-b1127030}, theorem A.7) we get
\begin{align*}
\| A_2 \|^2_{0,\Tref} & \cle \int_0^1 \int_0^1 \frac{(\vut(t) - \vut(s))^2}{(t-s)^2} \intd t \intd s = | \vut|^2_{1/2,E_1} \cle \| \vu \|^2_{1,\Tref}.
\end{align*}
For $B_2$ we proceed similarly using the Cauchy Schwarz inequality, Fubini's theorem, the substitution $t = x + s(1-x)$, and $1-x \ge t-x$ for $x < t < 1$, thus
\begin{align*}
  \| B_2 \|^2_{0,\Tref} &= \int_0^1 \int_0^{1-x} B_2^2 \intd y \intd x \cle \int_0^1 \frac{1}{1-x} \int_0^1  \left( \vut(x) - \vut(x+s(1-x)) \right )^2 \intd s \intd x \\
                       & =  \int_0^1 \frac{1}{(1-x)^2} \int_x^1  \left( \vut(x) - \vut(t) \right)^2 \intd t \intd x  = \iint\limits_{\substack{x \le t \le 1\\0\le x \le 1 }} \frac{\left( \vut(x) - \vut(t) \right)^2}{(1-x)^2} \intd(x,t)\\
                       &\cle \iint\limits_{\substack{0 \le x \le t \\0\le t \le 1 }} \frac{\left( \vut(x) - \vut(t) \right)^2}{(x-t)^2} \intd(x,t) \cle \int_0^1 \int_0^1 \frac{ \left( \vut(x) - \vut(t) \right)^2}{(x-t)^2} \intd x \intd t \cle  | \vut|^2_{1/2,E_1} \cle \| \vu \|^2_{1,\Tref},
\end{align*}
so we have $ \| \exttangc_{2,xx}(\vu)\|_{0,\Tref} \cle \| \vu \|_{1,\Tref}$ and assuming similar estimates for the other derivatives all together the $H^2$-continuity
\begin{align} \label{ext::exttangtwoest}
 \| \exttang_{2}(\vu)\|_{2,\Tref} \cle \| \vu \|_{1,\Tref}.
\end{align}
\begin{center}
\it Step 3
\end{center}
For this step we assume that the tangential values of the input function $\vu$ is zero on the edges $E_2$ and $E_3$ and that it has a zero tangential integral value, thus
\begin{align} \label{ext::assumptionsstepthree}
\int_{\partial \Tref} \vu \cdot \tau = 0 \quad \textrm{and} \quad \vu \cdot \tau|_{E_2} = \vu \cdot \tau|_{E_3} = 0.
\end{align}
%We now want to construct an extension from the lower edge onto the triangle with zero tangential values on $E_2$ and $E_3$.
We set  $u_\tau(x) := \vu(x,0) \cdot \tau$ on $E_1$ and $ \psi(x) := \int_0^x u_\tau(s) \intd s$, to define
\begin{align} \label{ext::tangextdefstepthree}
  \exttang_{3}(\vu) &:= \int_0^1 \psi(x+sy) \intd s - \frac{y}{1-x} \int_0^1 \psi(x+s(1-x)) \intd s \nonumber\\
  & \underbrace{- \frac{y}{x+y} \int_0^1 \psi(s(x+y)) \intd s}_{=: A_3} + \underbrace{ y \int_0^1 \psi(s) \intd s}_{=: B_3}.
\end{align}
As in step 2 we observe $\nabla \exttang_{3}(\vu) \cdot \tau |_{E_2} = \nabla \exttang_{3}(\vu) \cdot \tau |_{E_3} = 0$ and $\nabla \exttang_{3}(\vu) \cdot \tau |_{E_1} = \vut$. For the $H^2$-continuity we only have to estimate the terms $A_3$ and $B_3$ as the other terms are the same as in step~1 and step~2. For this note that due to the assumptions on $\vu$, we have $\psi(1) = \psi(0) = 0$, so using the identity $\frac{\intd}{\intd s} \psi(s(x+y)) \frac{1}{x+y} = \psi'(s(x+y))$ and integration by parts we write
\begin{align} \label{ext::tangextdefstepthreealtone}
  A_3 &= - \frac{y}{x+y} \int_0^1 \psi(s(x+y)) \intd s  = y \int_0^1 \psi'(s(x+y))(s-1) \intd s = y \int_0^1 \vut(s(x+y))(s-1) \intd s,
\end{align}
and
\begin{align} \label{ext::tangextdefstepthreealttwo}
B_3 =  y \int_0^1 \psi(s) \intd s = -y  \int_0^1 \psi'(s)s \intd s =  -y \int_0^1 \vut(s)s \intd s.
\end{align}
Using these representations and the same techniques as in step 2 and step 3 we estimate the $H^2$-norm of $A_3$ and $B_3$ to show
\begin{align} \label{ext::exttangthreeest}
 \| \exttang_{3}(\vu) \|_{2,\Tref} \cle \| \vu \|_{1,\Tref}.
\end{align}
\begin{center}
\it Step 4
\end{center}
We finally combine the three extensions to show theorem \ref{ext:tangh2ext}. For that assume we have a given function $\vu \in \bpolt^k(\Tref)$ with $\int_{\partial \Tref} \vu \cdot \tau = 0$. We first introduce two mappings from the reference triangle $\Tref$ to itself by
\begin{align*}
F_2: (x,y) \mapsto (x,1-x-y) \quad \textrm{and} \quad F_3: (x,y) \mapsto (y,x),
\end{align*}
where $F_2$ maps the values from $E_2$ to $E_1$ and vice versa, and the mapping $F_3$ from $E_3$ to $E_1$ and vice versa. Furthermore we define for $F_2$ and $F_3$ the corresponding covariant mappings $\cov_2$ and $\cov_3$. Using those transformation we now introduce the extensions from step 2 and step 3 also from the other edges, thus we define
\begin{align*}
\tilde{\exttang_2}(\vu)(x,y) = \exttang_2(\cov_2\vu)(F_2(x,y)) \quad \textrm{and} \quad \tilde{\exttang_3}(\vu)(x,y) = \exttang_3(\cov_3\vu)(F_3(x,y)),
\end{align*}
with the properties
\begin{align}
  &(\nabla \tilde{\exttang_2}(\vu)\cdot \tau)|_{E_2} = (\vu \cdot \tau)|_{E_2}  \quad  (\nabla \tilde{\exttang_2}(\vu)\cdot \tau)|_{E_1} = 0  \quad  \| \tilde{\exttang_2}(\vu) \|_{2,\Tref} \cle  \| \vu \|_{1,\Tref}, \label{ext::exttangtwotildeest}\\
    &(\nabla \tilde{\exttang_3}(\vu)\cdot \tau)|_{E_3} = (\vu \cdot \tau)|_{E_3}  \quad  (\nabla \tilde{\exttang_3}(\vu)\cdot \tau)|_{E_1} = (\nabla \tilde{\exttang_3}(\vu)\cdot \tau)|_{E_2}= 0  \quad  \| \tilde{\exttang_3}(\vu) \|_{2,\Tref} \cle  \| \vu \|_{1,\Tref},\label{ext::exttangthreetildeest}
\end{align}
what follows from the proper transformation of tangential values due to the use of the covariant transformations $\cov_2$ and $\cov_3$ and estimates \eqref{ext::exttangtwoest} and \eqref{ext::exttangthreeest}. We define the final extension by setting $e_1 := \exttang_1(\vu)$, $e_2 := e_1 + \tilde{\exttang_2}(\vu- \nabla e_1)$ and $\exttang(\vu) := e_2 + \tilde{\exttang_3}(\vu - \nabla e_2)$. Note that due to Green's theorem the surface integral over the boundary of the reference element $\partial \Tref$ of $(\vu - \nabla e_2) \cdot \tau$ is equal to zero, and due to the properties of $\tilde{\exttang_2}$ and $\exttang_1$, also the tangential values on $E_1$ and $E_2$ vanish, thus assumptions \eqref{ext::assumptionsstepthree} are fulfilled. Using \eqref{ext::exttangtwotildeest} and \eqref{ext::exttangthreetildeest} we observe
\begin{align*}
  \nabla \exttang(\vu) \cdot \tau|_{E_1} &= \nabla e_2 \cdot \tau|_{E_1} + \underbrace{\nabla \tilde{\exttang}_3(\vu - \nabla e_2) \cdot \tau|_{E_1}}_{=0} = \nabla e_1 \cdot \tau|_{E_1} + \underbrace{ \nabla \tilde{\exttang}_2(\vu - \nabla e_1) \cdot \tau|_{E_1}}_{=0} =  (\vu \cdot \tau)|_{E_1},
\end{align*}
and similarly $\nabla \exttang(\vu) \cdot \tau|_{E_2} =  \vu \cdot \tau|_{E_2}$ and $\nabla \exttang(\vu) \cdot \tau|_{E_3} = \vu \cdot \tau|_{E_3}$, 
%\begin{align}
%  (\nabla \exttang(\vu) \cdot \tau)|_{E_2} &= (\nabla e_2 \cdot \tau)|_{E_2} + \underbrace{(\tilde{\exttang}_3(\vu - \nabla e_2) \cdot \tau)|_{E_2}}_{=0} \\
%                                           &= (\nabla e_1 \cdot \tau)|_{E_2} + (\tilde{\exttang}_2(\vu - \nabla e_1) \cdot \tau)|_{E_2} \\
%                                           &=  (\nabla e_1 \cdot \tau)|_{E_2} + ((\vu - \nabla e_1) \cdot \tau)|_{E_2} = (\vu \cdot \tau)|_{E_2},
%\end{align}
%\begin{align}
%  (\nabla \exttang(\vu) \cdot \tau)|_{E_3} &= (\nabla e_2 \cdot \tau)|_{E_3} + (\tilde{\exttang}_3(\vu - \nabla e_2) \cdot \tau)|_{E_3} \\
%                                           &=  (\nabla e_2 \cdot \tau)|_{E_3} + ((\vu - \nabla e_2) \cdot \tau)|_{E_3} = (\vu \cdot \tau)|_{E_3},
%\end{align}
thus property \eqref{ext:h2exttang:propA} is fulfilled. With \eqref{ext::exttangtwotildeest}, \eqref{ext::exttangthreetildeest} and \eqref{ext::tangextsteponeest} and the linearity of the operators we furthermore have $\| \exttang(\vu) \|_{2,\Tref} \cle  \| \vu \|_{1,\Tref}$ thus also the $H^2$-continuity \eqref{ext:h2exttang:propC} holds true. It remains to show that $\exttang(\vu) \in \pol^{k+1}(\Tref)$. First note that from $\vu \in [\pol^k(\Tref)]^2$ it follows that $\vu \cdot \tau \in \pol^k(\partial \Tref)$. Looking at the definition of the first extension \eqref{ext::tangextdefstepone} we see that we integrate $u$ from $0$ to $x$ to define $\psi$, thus here we increase the order by one resulting in $\exttang_1(\vu) \in \pol^{k+1}(\Tref)$. For the other two  extensions \eqref{ext::tangextdefsteptwo} and \eqref{ext::tangextdefstepthree} this may not hold due to the fractional factors, but using the alternative representations \eqref{ext::tangextdefsteptwoalt}, \eqref{ext::tangextdefstepthreealtone}, \eqref{ext::tangextdefstepthreealttwo} we see that also the corrections are polynomial liftings thus we have $\exttang(\vu) \in \pol^{k+1}(\Tref)$.
\end{proof}
\subsection{Normal extension} \label{ext::sectionnormalgext}
\begin{theorem} \label{ext:normh2ext}
   For every $k$ there exists an operator $\extnorm : \pol_{00}^k(E_1) \rightarrow \pol^{k+1}(\Tref)$ such that for $u \in \pol_{00}^k(E_1)$ it holds 
    \begin{align}
      &\extnorm(u) = 0  \quad \textrm{on } \partial \Tref  \label{ext:h2extnorm:propA} \\
      &\nabla \extnorm(u) \cdot \vn = u  \quad \textrm{on } E_1  \label{ext:h2extnorm:propB} \\
      &\nabla \extnorm(u) \cdot \vn = 0  \quad \textrm{on } \partial \Tref \setminus  E_1  \label{ext:h2extnorm:propD} \\
      & \| \extnorm(u) \|_{2,\Tref} \cle \| u \|_{1/2^*,E _1} \label{ext:h2extnorm:propC}
    \end{align}          
\end{theorem}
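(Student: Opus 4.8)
The goal is to build an $H^2$-stable extension of a single scalar trace $u\in\pol_{00}^k(E_1)$ that prescribes the \emph{normal} derivative on $E_1$, vanishes together with its normal derivative on the other two edges, and is controlled in the weighted $H^{1/2^*}$-norm of $u$ on $E_1$. The key structural observation is that $u\in\pol_{00}^k(E_1)$ has a double zero at both endpoints $(0,0)$ and $(1,0)$ of $E_1$, which is exactly what is needed so that a lifting via the (weighted) $H^{1/2}_{00}$-norm is stable up to the corners; this is why the weighted norm $\|\cdot\|_{1/2^*,E_1}$, with the singular weights $1/x+1/(1-x)$, appears on the right-hand side. The plan is to first construct an extension $\extnorm_1(u)$ from $E_1$ ignoring the other two edges, then correct it so that the value and the normal derivative vanish on $E_2$ and $E_3$.

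\textbf{Step 1: primary lifting from $E_1$.} I would mimic the tangential construction of Theorem~\ref{ext:tangh2ext}, Step~1, but now integrate the \emph{trace itself} rather than its primitive, since here $u$ is the prescribed \emph{normal} derivative $\nabla\extnorm(u)\cdot\vn$ on $E_1$ and $\vn=(0,-1)^t$ there. Concretely, set something like $\extnorm_1(u)(x,y):=-\,y\int_0^1 u(x+sy)\,\intd s$ (up to a sign/normalisation), so that $\extnorm_{1,y}(u)|_{y=0}=-u(x)$, i.e. $\nabla\extnorm_1(u)\cdot\vn=u$ on $E_1$, while $\extnorm_1(u)|_{E_1}=0$. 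The $H^2$-bound $\|\extnorm_1(u)\|_{2,\Tref}\cle\|u\|_{1/2^*,E_1}$ is proved by the same Cauchy--Schwarz, substitution $t=x+sy$, Fubini and Peetre $K$-functional machinery as in the tangential proof; the factor of $y$ in front makes the $L^2$ and first-order estimates routine, and the second-order estimates are where the $K$-functional characterisation of $\|u\|_{1/2,E_1}$ enters. The extra singular-weight terms $\|u\|_{0^*,E_1}^2=\int_0^1(1/x+1/(1-x))u^2$ are precisely what is produced when one differentiates the factors $1/x$, $1/(1-x)$ that will be introduced by the corner corrections below, so they must be carried along from the start.

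\textbf{Step 2: corner/edge corrections.} After Step~1, $\nabla\extnorm_1(u)\cdot\vn$ and $\extnorm_1(u)$ need not vanish on $E_2$ and $E_3$. As in Step~2--3 of Theorem~\ref{ext:tangh2ext}, I would subtract off correction terms built from the same data but with arguments rescaled to the edges $E_2$ (where $x+y=1$, so a factor $1/(1-x)$ appears) and $E_3$ (where $x=0$, so a factor $1/(x+y)$ appears), chosen so that both the trace and its normal derivative are killed on those edges. Because $u\in\pol_{00}^k(E_1)$ vanishes to second order at the two shared vertices, these rescaled integrands still vanish there, so each fractional prefactor is cancelled and, via integration by parts, each correction is again a genuine polynomial lifting of degree $\le k+1$; this is what yields $\extnorm(u)\in\pol^{k+1}(\Tref)$. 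Finally one composes with the affine maps $F_2,F_3$ (and their covariant pull-backs, here just scalar transformations) as in Step~4 to handle the edges symmetrically, checks \eqref{ext:h2extnorm:propA}--\eqref{ext:h2extnorm:propD} edge by edge, and collects the $H^2$-bounds by linearity.

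\textbf{Main obstacle.} The delicate point is the $H^2$-stability of the corner corrections: differentiating the terms with denominators $(1-x)$ or $(x+y)$ twice produces factors like $1/(1-x)^3$, and controlling the resulting integrals requires exactly the trick used for the term $A_2$ in Theorem~\ref{ext:tangh2ext} (subtracting an averaged value, rewriting $\int(u(t)-\bar u)^2$ as a double integral $\tfrac12\iint(u(t)-u(s))^2$, then Fubini plus the elementary bound $1-\min(s,t)\ge|t-s|$ to convert the singular weight into the Sobolev--Slobodeckij kernel $1/(t-s)^2$), together with the double-zero at the vertices to absorb the leftover $1/x$, $1/(1-x)$ into the weighted norm $\|u\|_{0^*,E_1}$. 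I expect this bookkeeping — verifying that \emph{every} second derivative of \emph{every} correction term is bounded by $\|u\|_{1/2^*,E_1}$, and that no uncancelled corner singularity remains — to be the main work; the polynomiality and the boundary identities are comparatively mechanical once the representations via integration by parts are in place.
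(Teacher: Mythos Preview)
Your high-level plan (primary lifting from $E_1$, then corrections on $E_2$ and $E_3$, with polynomiality saved by the double zeros of $u$ at the vertices) matches the paper. However, two concrete ingredients differ from what you outline, and the second one is the idea you are actually missing.

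\textbf{Step 1.} The paper does not take $\extnorm_1(u)=-y\int_0^1 u(x+sy)\,ds$ but inserts a fixed weight $a(s)=6s(1-s)$, i.e.\ $\extnorm_1(u)=-y\int_0^1 a(s)\,u(x+sy)\,ds$. Because $a(0)=a(1)=0$ and $\int_0^1 a=1$, $\int_0^1 a'(s)s\,ds=-1$, every integration by parts in $s$ produces no boundary terms, so all first and second derivatives of $\extnorm_1(u)$ are again averages of $u$ (not of $u'$), and the $H^2$ bound goes through with the $K$-functional exactly as in Step~1 of the tangential proof. Your unweighted choice also satisfies the boundary condition, but the analysis is less clean.

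\textbf{Step 2/3: the correction mechanism is not the one from the tangential theorem.} This is the real gap. In the tangential construction you only have to kill \emph{one} trace (the tangential derivative) on each remaining edge, and the rescaling corrections $\frac{y}{1-x}\int_0^1\psi(x+s(1-x))\,ds$ etc.\ suffice. Here you must kill \emph{two} traces on $E_2$ (and then on $E_3$): the value of $\extnorm_1(u)$ \emph{and} its normal derivative. The paper does this with cubic Hermite blending in the transverse variable: with $b(s)=3s^2-2s^3$ and $c(s)=s^3-s^2$ (so $b(0)=b'(0)=b'(1)=0$, $b(1)=1$ and $c(0)=c'(0)=c(1)=0$, $c'(1)=1$) one sets
\[
\extnorm_2(u)(x,y)=\extnorm_1(u)(x,y)-b\!\left(\tfrac{y}{1-x}\right)\extnorm_1(u)(x,1-x)-c\!\left(\tfrac{y}{1-x}\right)(1-x)\,\extnorm_{1,y}(u)(x,1-x),
\]
and analogously with $y/(x+y)$ for $E_3$. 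The Hermite conditions guarantee that on $E_1$ nothing changes, while on $E_2$ both the value and $\partial_y$ vanish; since the value is zero along $E_2$ the tangential derivative is zero there too, hence so is the normal derivative. The affine maps $F_2,F_3$ from Step~4 of the tangential proof play no role here.

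\textbf{$H^2$ estimate of the corrections.} You anticipate the $A_2$-trick (subtract the mean, rewrite as a double integral, bound by the Slobodeckij kernel). The paper does \emph{not} use that here. The cubic blending produces denominators $(1-x)^3$; after using $y/(1-x)\le 1$ and integrating $y$ from $0$ to $1-x$, one power is absorbed, and a single application of Fubini in the remaining $(x,t)$ integral converts $\int_0^1\frac{1}{(1-x)^2}\int_x^1 u(t)^2\,dt\,dx$ directly into $\int_0^1\frac{u(t)^2}{1-t}\,dt\le\|u\|_{0^*,E_1}^2$. So the corrections are controlled by the weighted $L^2$ part of $\|u\|_{1/2^*,E_1}$, not by the $|u|_{1/2}$ seminorm; your ``leftover $1/x,\,1/(1-x)$'' intuition is right, but the route to it is simpler than the one you sketched. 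The polynomiality argument is as you say: since $b,c$ are cubic the denominators are of order three, and the factor $(1-x)$ coming from $\extnorm_1(u)(x,1-x)$ together with $u(x+s(1-x))=(1-x)^2(1-s)^2 v(\cdot)$ supplies exactly $(1-x)^3$ in the numerator.
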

\begin{proof}
  Similar to the proof of theorem \ref{ext:tangh2ext} we proceed in several steps. We first construct an extension from the lower edge and correct the value and the normal derivative on the other two edges afterwards.
\begin{center}
\it Step 1
\end{center}
We start with the first extension, so we define
\begin{align} \label{ext::extnormstepone}
 \extnorm_1(u) := -y \int_0^1a(s)u(x+sy) \intd s \quad \textrm{with} \quad a(s):=6s(1-s).
\end{align}
It immediately follows $\extnorm_1(u)|_{E_1} = 0$. For the derivations we observe due to $a(0) = a(1) = 0 $, $\int_0^1a(s) \intd s = 1$, $\int_0^1a'(s)s \intd s = -1$ and using integration by parts with $u'(x+sy) = \frac{1}{y}\frac{d}{ds} u(x+sy)$, that
\begin{align*}
\extnorm_{1,x}(u) &=  -y \int_0^1 a(s) u'(x+sy) \intd s =  \int_0^1 a'(s) u(x+sy) \intd s \\
   \extnorm_{1,y}(u) &= -\int_0^1 a(s) u(x+sy) \intd s  - y\int_0^1 a(s) u'(x+sy)s \intd s = \int_0^1 a'(s) u(x+sy)s \intd s.                                              
\end{align*}
thus
\begin{align*}
  \nabla \extnorm_1(u) \cdot n|_{E_1} =- \extnorm_{1,y}(u)|_{E_1} =  u.
\end{align*}
The $H^2$-continuity estimate follows with the K-functional technique presented in step 1 in the proof of theorem \ref{ext:tangh2ext} for the derivations and for the rest by the Cauchy Schwarz inequality, thus we have
\begin{align*}
\| \extnorm_1(u) \|_{2,\Tref} \cle \| u \|_{1/2,E_1} \cle \| u \|_{1/2^*,E_1} 
\end{align*}
\begin{center}
\it Step 2
\end{center}
In this step we want to correct the values and the normal derivative on the second edge $E_2$ without changing the values and the normal derivative on the bottom edge $E_1$. For this we introduce the polynomials $b(s) = 3s^2-2s^3$ and $c(s)=s^3-s^2$, with the properties
\begin{align*}
  b(0) = b'(0) = b'(1) = 0, b(1) = 1\quad \textrm{and} \quad   c(0) = c'(0) = c(1) = 0, c'(1) = 1,
\end{align*}
and use them as blending coefficients to define
\begin{align*}
 \extnorm_2(u)(x,y):=  \extnorm_1(u)(x,y) - b(\frac{y}{1-x})\extnorm_1(u) (x,1-x) - c(\frac{y}{1-x})(1-x) \extnorm_{1,y}(u)(x,1-x).
\end{align*}
The idea is that the second term corrects the values and the last term corrects the normal derivation on the edge $E_2$. Indeed we observe on $E_1$ as $y=0$ and on $E_2$ as $y = 1-x$ that
\begin{align}
  \extnorm_2(u)|_{E_1}&= \extnorm_1(u)|_{E_1}- b(0) \extnorm_1(u)(x,1-x) - c(0) \extnorm_{1,y}(u)(x,1-x) = 0,\nonumber \\ %\label{ext::extnormstep2onedgetwoA}
  \extnorm_2(u)|_{E_2}&= \extnorm_1(u)|_{E_1}- b(1) \extnorm_1(u)(x,1-x) - c(1) \extnorm_{1,y}(u)(x,1-x) = 0.\label{ext::extnormstep2onedgetwoB}
\end{align}
Next we look at the derivation with respect to $y$ using the chain and product rule
%\begin{align}
%  \frac{\partial \extnorm_2(u)}{\partial x} := \extnorm_{2,x}(u) &=  \extnorm_{1,x}(u) \\
%                                                                 & + \frac{y}{(1-x)^2}b'(\frac{y}{1-x}) \extnorm_1(u) (x,1-x) \\
%                                                                 &- b(\frac{y}{1-x}) \nabla^\perp \extnorm_{1}(u)(x,1-x)\\
%                                                                 & + \frac{y}{(1-x)^2}c'(\frac{y}{1-x}) \extnorm_{1,y}(u) (x,1-x) \\
%                                                                 &- c(\frac{y}{1-x}) \left( -\extnorm_{1,y}(u)(x,1-x)+(1-x) \nabla^\perp \extnorm_{1,x}(u)(x,1-x)\right),
%\end{align}
%where $\extnorm_{1,xx}$ and $\extnorm_{1,xy}$ are the second order derivatives, and
\begin{align*}
\extnorm_{2,y}(u) &=  \extnorm_{1,y}(u) - \frac{1}{1-x}b'(\frac{y}{1-x}) \extnorm_1(u) (x,1-x)  - \frac{1}{1-x}c'(\frac{y}{1-x})(1-x) \extnorm_{1,y}(u) (x,1-x).
\end{align*}
We see that
\begin{align*}
 \extnorm_{2,y}(u)|_{E_1}= \extnorm_{1,y}(u)|_{E_1} - \frac{b'(0)}{1-x} \extnorm_{1}(u)(x,1-x) - c'(0) \extnorm_{1,y}(u)(x,1-x) = \extnorm_{1,y}(u)|_{E_1} = u
\end{align*}
thus the normal derivative $\nabla  \extnorm_{2}(u) \cdot n|_{E_1}= \nabla  \extnorm_{1}(u) \cdot n|_{E_1} = u$ has not changed in the second step. In a similar way we also observe that $\extnorm_{2,y}(u)|_{E_2}=0$. Now note that due to the constant zero value on the edge, see equation \eqref{ext::extnormstep2onedgetwoB}, we derive that the tangential derivation on the edge $\nabla \extnorm_2(u) \cdot \tau$ on $E_2$ has to be zero. As $\nabla \extnorm_2(u) \cdot \tau|_{E_2} = 0 \Leftrightarrow  - \extnorm_{2,x}(u)|_{E_2} = \extnorm_{2,y}(u)|_{E_2}$ and $\extnorm_{2,y}(u)|_{E_2} = 0$, it follows $\nabla  \extnorm_{2}(u) \cdot n|_{E_2} = 0$, so the correction term induced a zero normal derivative on the second edge $E_2$. It remains the $H^2$-estimate. As the first term of $ \extnorm_2(u)$ was already analysed in the first step, we just focus on the correction terms. We start with first term $A_4:=  b(\frac{y}{1-x})\extnorm_1(u) (x,1-x)$ and the estimate for the $y$ derivative
\begin{align*}
A_{4,y} = 6\frac{y}{1-x}\left(1-\frac{y}{1-x}\right) \int_0^1 u(x+s(1-x))a(s) \intd s.
\end{align*}
Using the Cauchy Schwarz inequality, $\frac{y}{1-x} \le 1$ on $\Tref$ and the substitution $t := x+s(1-x)$ we get
\begin{align*}
  \| A_{4,y} \|_{0,\Tref}^2 &\cle \int_0^1\int_0^{1-x}\int_0^1 u(x+s(1-x))^2  \intd s \intd y \intd x = \int_0^1\int_0^{1-x} \frac{1}{1-x}\int_x^1 u(t)^2  \intd t \intd y \intd x \\
  &= \int_0^1 \int_x^1 u(t)^2 \intd t \intd x \cle \| u \|_{0,E_1}^2 \cle \| u \|_{1/2^*,E_1}^2,
\end{align*}
and in a similar way we also bound $\| A_{4,x} \|_{0,\Tref}$ and $\| A_4 \|_{0,\Tref}$. The crucial point in this estimate was, that we were able to use property $\frac{y}{1-x} \le 1$ twice, thus there were no {\it bad} coefficients anymore. The estimates of the second order derivatives are a little bit more tricky as there remain some fractions with singularities. We start with the second order derivation with respect to $y$ given by
\begin{align*}
 A_{4,yy} = 6\frac{1}{1-x}\left(1-\frac{2y}{1-x}\right) \int_0^1 u(x+s(1-x))a(s) \intd s.
\end{align*}
The idea is to use Fubini's theorem
\begin{align*}
  \| A_{4,yy} \|_{0,\Tref}^2 &\cle \int_0^1\int_0^{1-x} \frac{1}{(1-x)^2}\int_0^1 u(x+s(1-x))^2  \intd s \intd y \intd x \\
                           &= \int_0^1 \frac{1}{(1-x)^2}\int_x^1 u(x+s(1-x))^2  \intd s \intd y \intd x = \iint\limits_{\substack{0 \le x \le 1\\ x \le t}} \frac{u(t)^2}{(1-x)^2}\intd(x,t) \\
                           &= \iint\limits_{\substack{0 \le t \le 1\\ t \le x}} \frac{u(t)^2}{(1-x)^2}\intd(x,t) = \int_0^1\int_0^t \frac{1}{(1-x)^2} \intd x~ u(t)^2 \intd t \\
  &= \int_0^1 \frac{1}{1-t}u(t)^2 \intd t \cle  \| u \|_{0^*,E_1}^2 \cle \| u \|_{1/2^*,E_1}^2. 
\end{align*}
With the techniques just presented and the techniques from the proof in theorem \ref{ext:tangh2ext} all other derivations of the second correction are bounded and we get
\begin{align*}
\| \extnorm_2(u) \|_{2,\Tref}  \cle \| u \|_{1/2^*,E_1} 
\end{align*}
\begin{center}
\it Step 3
\end{center}
Similar to step 2 we correct now the values on the last edge $E_3$ with two more corrections using the same blending coefficients, thus we define
\begin{align*}
  \extnorm(u)(x,y):=&  \extnorm_2(u)(x,y) - b(\frac{y}{x+y})\extnorm_2(u) (0,x+y) \\
  &- c(\frac{y}{x+y})(x+y) \extnorm_{2,x}(u)(0,x+y) +c(\frac{y}{x+y})(x+y) \extnorm_{2,y}(u)(0,x+y).
\end{align*}
With the same arguments and estimates as in step 2 it follows \eqref{ext:h2extnorm:propA},\eqref{ext:h2extnorm:propB},\eqref{ext:h2extnorm:propD} and \eqref{ext:h2extnorm:propC}
\begin{gather*}
  \extnorm(u)|_{E_1} =   \extnorm(u)|_{E_2} =   \extnorm(u)|_{E_3} = 0 \\
  (\nabla \extnorm(u)\cdot n)|_{E_2} = (\nabla  \extnorm(u)\cdot n)|_{E_3}  = 0 \quad \textrm{and} \quad (\nabla \extnorm(u)\cdot n)|_{E_1} = u \\
\| \extnorm(u) \|_{2,\Tref}  \cle \| u \|_{1/2^*,E_1}.
\end{gather*}
It remains to show that $\extnorm(u)$ belongs to $\pol^{k+1}(\Tref)$. The idea is similar to the tangential extension. Looking at the definition of the first step \eqref{ext::extnormstepone} we increase the order by multiplying with $y$. The crucial parts are the correction terms as the blending polynomials $b(y/(1-x))$ and $c(y/(1-x))$ for the second step, and $b(y/(x+y))$ and $c(y/(x+y))$ for the third step produce singularities of order three in the points $(0,0)$ and $(1,0)$. To overcome this problem note that the given polynomial has a zero of order two in the vertices, thus there exists a polynomial $v \in \pol^{k-2}(E_1)$ such that $u(x) = (1-x)^2v(x)$. Using the definitions of the polynomials $b$ and $c$ the extension of the second step  $\extnorm_2(u)$ reads as
\begin{align*}
  \extnorm_2(u)(x,y) = y \int_0^1 a(s)u(x+sy) \intd s &- \frac{3 y^2(1-x)-2y^3}{(1-x)^2}\int_0^1a(s) u(x+s(1-x)) \intd s \\
  & - \frac{y^3-y^2(1-x)}{(1-x)^2}\int_0^1a'(s) u(x+s(1-x))s \intd s.
\end{align*}
As $u(x+s(1-x)) = (1-x)^2(1-s)^2v(x+s(1-x))$ this leads to
\begin{align*}
  \extnorm_2(u)(x,y) = y \int_0^1 a(s)u(x+sy) \intd s &- (3 y^2(1-x)-2y^3)\int_0^1a(s) (1-s)^2v(x+s(1-x)) \intd s \\
  & - (y^3-y^2(1-x))\int_0^1a'(s) (1-s)^2v(x+s(1-x))s \intd s,
\end{align*}
thus $\extnorm_2(u) \in \pol^{k+1}(\Tref)$. For the third step we do the same by writing $u(x) = x^2w(x)$ with $w \in \pol^{k-2}(E_1)$ finally leading to $\extnorm(u) \in \pol^{k+1}(\Tref)$.
\end{proof}
\begin{remark} \label{remark::normalext}
In a similar way as in the last step of the proof of theorem \ref{ext:tangh2ext} it is possible to define the normal extension $\extnorm$ for the other two edges $E_2$ and $E_3$ by using proper transformations. We then use the subscript $\extnorm_{E_i}(\cdot)$ with $i \in \{ 1,2,3 \}$ to symbolize which extension is used.
\end{remark}
\subsection{Splitting into compatible and incompatible polynomials} \label{ext::sectiondivide}
Using theorem \ref{ext:normh2ext} it would now be possible to correct the normal derivative after a first extension using theorem \ref{ext:tangh2ext}. The crucial point is that the polynomial would need a zero of order two in the vertices. The following theorem helps us later to provide a stable splitting of the correction into two parts.
\begin{theorem} \label{ext:dividetheorem}
  Assume a given function $u \in \pol^k(E_1)$, with $u =0 $ on $\partial E_1$. Then it holds
  \begin{align} \label{ext::divtheorem:A}
    |u'(1)| \cle k^2 \| u \|_{1/2^*,E_1}.
  \end{align}  
  Furthermore there exists a function $e \in \pol^k(E_1)$ with $e'(1) = 1$ and $e'(0) = e(0) = e(1) = 0$ such that
    \begin{align} \label{ext::divtheorem:B}
      \| e\|_{1/2^*,E_1} \cle \frac{1}{k^2} \quad \textrm{and} \quad \| e\|_{0,E_1} \cle \frac{1}{k^3}.
    \end{align}
\end{theorem}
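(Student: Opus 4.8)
The statement has two parts: the inverse‑type trace bound \eqref{ext::divtheorem:A} and the construction of the normalized correction function in \eqref{ext::divtheorem:B}. For both I would work on $E_1\cong[0,1]$ and reduce everything to classical facts about Jacobi polynomials on the interval together with the equivalence of $\|\cdot\|_{1/2^*,E_1}$ and the $H^{1/2}_{00}(E_1)$‑norm; in particular, since $\|\cdot\|_{0^*,E_1}\cle\|\cdot\|_{1/2^*,E_1}$, it suffices to use only the $\|\cdot\|_{0^*,E_1}$‑part for \eqref{ext::divtheorem:A}.

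\emph{Proof of \eqref{ext::divtheorem:A}.} Since $u(0)=u(1)=0$, factor $u(x)=x(1-x)w(x)$ with $w\in\pol^{k-2}(E_1)$; then $u'(1)=-w(1)$, and with $\omega(x):=x(1-x)$ one has $\|u\|_{0^*,E_1}^2=\int_0^1 w(x)^2\omega(x)\intd x=:\|w\|_{L^2_\omega}^2$. Let $K(\cdot,\cdot)$ be the reproducing kernel of $\pol^{k-2}(E_1)$ for the inner product $\langle f,g\rangle_\omega:=\int_0^1 fg\,\omega$. The reproducing property gives $w(1)=\langle w,K(\cdot,1)\rangle_\omega$ and $\|K(\cdot,1)\|_{L^2_\omega}^2=K(1,1)$, hence $|u'(1)|=|w(1)|\le\sqrt{K(1,1)}\,\|w\|_{L^2_\omega}$. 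It remains to show $K(1,1)\cle k^4$: after the affine map $E_1\to[-1,1]$ the weight $\omega$ becomes $\tfrac18(1-t^2)$, so the $\langle\cdot,\cdot\rangle_\omega$‑orthonormal basis is built from the Jacobi polynomials $P_n^{(1,1)}$ and $K(1,1)=\sum_{n=0}^{k-2}\intjacobi_n(1)^2$ with $\intjacobi_n$ the normalized polynomial. Using $P_n^{(1,1)}(1)=n+1$ and $\int_{-1}^1(P_n^{(1,1)})^2(1-t^2)\intd t=\tfrac{8(n+1)}{(2n+3)(n+2)}$ yields $\intjacobi_n(1)^2=(2n+3)(n+1)(n+2)$, so $K(1,1)=\sum_{n=0}^{k-2}(2n+3)(n+1)(n+2)\cle k^4$. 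Combining, $|u'(1)|\cle k^2\|u\|_{0^*,E_1}\le k^2\|u\|_{1/2^*,E_1}$.

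\emph{Construction of $e$.} Assume $k\ge3$ and set $m:=\lfloor(k-1)/2\rfloor$. Let $U_{m-1}$ be the Chebyshev polynomial of the second kind; since $U_{m-1}(\sqrt x)^2$ is a polynomial in $x$ of degree $m-1$, the function $Q_m(x):=m^{-4}U_{m-1}(\sqrt x)^4$ lies in $\pol^{2(m-1)}(E_1)$, and writing $2x-1=\cos\theta$ it is the normalized Jackson kernel $Q_m(\theta)=m^{-4}\big(\sin(m\theta/2)/\sin(\theta/2)\big)^4$, so $Q_m\ge0$, $Q_m(1)=1$, and $Q_m(\theta)\cle\min\{1,(m\theta)^{-4}\}$. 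Define $e(x):=-(1-x)\,x^2\,Q_m(x)\in\pol^{2m+1}(E_1)\subset\pol^k(E_1)$. The factored form gives directly $e(0)=e'(0)=e(1)=0$ and $e'(1)=Q_m(1)=1$. For the norm bounds I would substitute $2x-1=\cos\theta$ (so $\intd x=\tfrac12\sin\theta\intd\theta$ and $1-x=\sin^2(\theta/2)\cle\theta^2$ on $[0,\pi]$) and use the kernel decay to get
\begin{align*}
\|e\|_{0,E_1}^2\cle\int_0^\pi\theta^4\min\{1,(m\theta)^{-8}\}\,\theta\intd\theta\cle m^{-6},\qquad \|e\|_{0^*,E_1}^2\cle\int_0^\pi\theta^2\min\{1,(m\theta)^{-8}\}\,\theta\intd\theta\cle m^{-4},
\end{align*}
i.e. $\|e\|_{0,E_1}\cle k^{-3}$ and $\|e\|_{0^*,E_1}\cle k^{-2}$. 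For the remaining part of the $1/2^*$‑norm I would use the interpolation inequality $|e|_{1/2,E_1}\cle\|e\|_{0,E_1}^{1/2}\|e\|_{1,E_1}^{1/2}$ together with a direct bound $\|e\|_{1,E_1}\cle k^{-1}$ (the derivative $e'$ is of order one only on the transition region $1-x\sim m^{-2}$ of width $\sim m^{-2}$, the tail being controlled by the same $\theta$‑integrals), giving $|e|_{1/2,E_1}\cle k^{-2}$ and hence $\|e\|_{1/2^*,E_1}\cle k^{-2}$. The finitely many remaining small $k$ are trivial, any fixed polynomial with the prescribed vertex data being admissible.

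\emph{Main obstacle.} The delicate point is the exponent $k^{-3}$ in \eqref{ext::divtheorem:B}: the $L^2_\omega$‑minimiser that is extremal in \eqref{ext::divtheorem:A} only gives $\|e\|_{0,E_1}\sim k^{-2}$, so one power of $k$ must be gained by forcing $e$ to be genuinely localized on the optimal resolution scale $k^{-2}$ at the vertex. This is exactly why a fourth power is taken (the Jackson, not the Fejér, kernel): it makes the tail of $Q_m$ decay fast enough that $(1-x)Q_m$ is square‑integrable with the right rate, whereas a Fejér kernel would only give $\|e\|_{0,E_1}\cle k^{-2}$ up to a logarithm. A second point requiring care is that $\|e\|_{1,E_1}$ is only $O(k^{-1})$, not small, so the Slobodeckij seminorm of $e$ must be obtained by interpolation between $L^2$ and $H^1$ rather than by a crude direct bound. (As an alternative to the explicit kernel, $e$ could be taken as the reproducing‑kernel minimiser for the heavier weight $x^2(1-x)^2$, which also meets all three requirements, but then the estimates rely on decay of the associated Christoffel–Darboux kernel away from the vertex.)
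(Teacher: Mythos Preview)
Your argument is correct for both parts and takes a genuinely different route from the paper.

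For \eqref{ext::divtheorem:A}, the paper argues indirectly and in the opposite order: it first establishes \eqref{ext::divtheorem:B} by minimizing the weighted seminorm $|v|_{1^*}^2=\int_{-1}^1(1-x)(v')^2$ over $\{v\in\pol^k: v(1)=0,\ v'(1)=1\}$ via Lagrange multipliers in the integrated Jacobi basis $\intjacobi_n$, obtaining the sharp minimum $\sim k^{-4}$; then, to prove \eqref{ext::divtheorem:A}, it lifts $u$ to $\Tref$ by an averaging operator $\psi$, takes vertical means to get a one–variable function $\overline{u}$ with $\overline{u}'(1)=u'(1)$ and $|\overline{u}|_{1^*,E_1}\cle\|u\|_{1/2,E_1}$, and reads off $|\overline{u}'(1)|\cle k^2|\overline{u}|_{1^*,E_1}$ by duality with the sharp minimum. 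Your Christoffel–function computation is more direct, decouples the two parts, and actually yields the slightly stronger statement $|u'(1)|\cle k^2\|u\|_{0^*,E_1}$.

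For \eqref{ext::divtheorem:B}, the paper's $e$ is (after transforming to $E_1$ and multiplying by $x^2/2$) precisely that weighted-$H^1$ minimizer; the $L^2$ and $H^1$ bounds are read off from the explicit Jacobi coefficients, and the $1/2^*$ bound is obtained by the same interpolation $H^{1/2}_{00}=[L^2,H^1_0]_{1/2}$ you use. Your Jackson-kernel construction reaches the same orders via localization at the scale $1-x\sim k^{-2}$ rather than by minimization. The paper's approach has the conceptual advantage that a single extremal object serves both statements; yours has the advantage of making the localization explicit and keeping \eqref{ext::divtheorem:A} independent of \eqref{ext::divtheorem:B}.
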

\begin{proof}
  We start with the second statement. For this we present the proof on the interval $E$ from which the original statement follows with a linear transformation. We want to remind the reader of  the definition of Jacobi polynomials with respect to the weight function $(1-x)^\alpha$, see for example in \cite{Abramowitz} or \cite{andrews}, 
  \begin{align*}
p^\alpha_n(x) := \frac{1}{2^nn!(1-x)^\alpha} \frac{d}{d x^n}\left( (1-x)^\alpha(x^2-1)^n \right) \quad n \in \N_0, \alpha > -1,
  \end{align*}
  where in the special case of $\alpha = 0$ the polynomials are called Legendre polynomials. For our proof we use integrated Jacobi polynomials with $\alpha=1$
  \begin{align*}
    \intjacobi_n(x) &:= -\int_x^1 p_{n-1}^1(s) \intd s \quad  n\ge 1 \quad \textrm{and} \quad \intjacobi_0 (x) := 1,
  \end{align*}
  and integrated Legendre polynomials
  \begin{align*}
    l_{n+1}(x) := -\int_x^1 p_n^0(s) \intd s \quad  n \ge 0 \quad \textrm{and} \quad l_0 := -x + 1.
  \end{align*}
 It holds the following properties, see \cite{andrews} and \cite{MR2221053}, 
 \begin{gather}
    \intjacobi_n (1) = 0 \quad 1 \le n \le k \quad \textrm{and} \quad  \intjacobi'_n (1) = n \quad 0 \le n \le k \label{ext::polpropC}\\
    (2n+1) p_n^0 = (n+1) p^1_n - n p^1_{n-1} \quad n\ge 0 \quad \textrm{and} \quad  p_m^1 = \frac{1}{m+1} \sum\limits_{n=0}^m (2n+1) p_n^0 \quad m\ge 0,\label{ext::polpropE} \\
           (2n+1) l_{n+1} = p^0_{n+1} - p^0_{n-1} \quad n>0,\label{ext::polpropF}
 \end{gather}
 where we used $p^0_{-1} := -1$. Furthermore we have a weighted $L^2$ orthogonality for the Jacoby polynomials, and due to the definition a weighted orthogonality in the $H^1$ seminorm for the integrated Jacoby polynomials
 \begin{align} \label{ext::dividingtheorem::orthogonality}
\int_{-1}^1 (1-x) \intjacobi'_n(x) \intjacobi'_m(x) \intd x = \int_{-1}^1 (1-x) p^1_{n-1}(x) p^1_{m-1}(x) \intd x = \delta_{n,m} \frac{2}{n+1},
 \end{align}
 where $\delta$ is the Kronecker delta.
 To find a proper candidate which fulfills the bounds \eqref{ext::divtheorem:B} we first seek for the minimum of the weighted $H^1$ seminorm with proper restrictions, thus
\begin{align*}
\tilde{e} := \arg \min\limits_{\substack{v \in \pol^k \\ v(1) = 0 \\v'(1) = 1}}  \int_{-1}^1(1-x)v'(x)^2 \intd s = \arg \min\limits_{\substack{v \in \pol^k \\ v(1) = 0 \\v'(1) = 1}} | v|^2_{1^*,E}.
\end{align*}
Using integrated Jacobi polynomials as basis for $\pol^k(E)$ we use the representation of $\tilde{e}$ with coefficients $c_j$ as $\tilde{e}(x) = \sum\limits_{j=0}^k c_j \intjacobi_j(x)$. To determine the coefficients, so to explicitly solve the minimization problem, we first note that due to the boundary restrictions $\tilde{e}(1) = 0$ it is clear that $c_0 =0$ and with \eqref{ext::polpropC} we get $\tilde{e}'(1) = \sum\limits_{j=1}^k c_j j = 1$. Using \eqref{ext::dividingtheorem::orthogonality} we furthermore have $| \tilde{e}|^2_{1^*,E} = \sum\limits_{j=1}^k c_j^2\frac{2}{j+1}$.  Now we use the technique of Lagrangian multipliers, thus we define the function 
\begin{align*}
  L(c_1,\dots,c_k,\lambda) = \sum\limits_{j=1}^k c_j^2\frac{2}{j+1} +  \lambda (\sum\limits_{j=1}^k c_j j -1) \quad \textrm{with} \quad \frac{\partial L}{ \partial c_j} \stackrel{!}{=} 0 \quad  \forall j \in \{1,\dots,k \} \quad \textrm{and} \quad \frac{\partial L}{\partial  \lambda} \stackrel{!}{=} 0.
\end{align*}
Solving this leads to
\begin{align} 
\lambda = \frac{-48}{k(k+1)(k+2)(3k+1)}  \quad \textrm{and} \quad c_j= \frac{-12 j(1+j)}{k(k+1)(k+2)(3k+1)},\label{ext::divi::coeffs}
\end{align}
and
\begin{align} \label{ext:divtheorem:approx}
| \tilde{e} |^2_{1*,E} = \sum\limits_{j=1}^k c_j^2 \frac{2}{1+j} = \frac{24}{3k^4+10k^3+9k^2+2k} \approx \frac{1}{k^4}.
\end{align}
For the $L^2$ norm we observe using \eqref{ext::polpropE} and \eqref{ext::polpropF} that
\begin{align*}
  \tilde{e}(x) &=\sum\limits_{j=1}^k -c_j\int_x^1 p_{j-1}^1(s) \intd s = \sum\limits_{j=1}^k -c_j\int_x^1 \frac{1}{j} \sum_{i=0}^{j-1}(2i+1)p_{i}^0(s) \intd s = \sum\limits_{j=1}^k \frac{c_j}{j} \sum_{i=0}^{j-1}\underbrace{-(2i+1)\int_x^1p_{i}^0(s)}_{= (2i+1)l_{i+1}} \intd s\\
               & = \sum\limits_{j=1}^k \frac{c_j}{j} \sum_{i=0}^{j-1}\left( p_{i+1}^0(x) - p_{i-1}^0(x)\right) = \sum\limits_{j=1}^k \frac{c_j}{j} \left( p_{j}^0(x) + p_{j-1}^0(x)\right),
\end{align*}
and so with the definition of the coefficients \eqref{ext::divi::coeffs} and using an inverse inequality (for example in \cite{Bernardi:1997:SMH} page 253) also
\begin{align*}
\| \tilde{e} \|^2_{0,E} = \sum_{j=1}^k \frac{c_j^2}{j^2} \|  p_{j}^0 + p_{j-1}^0 \|_{0,E}^2 \cle \sum_{j=1}^k \frac{j^2}{k^8} \underbrace{ \|  p_{j}^0  \|_{0,E}^2}_{\cle \frac{2}{2j+1}} \cle \frac{j}{k^8} \sum_{j=1}^k1 \cle \frac{1}{k^6} \quad \text{and} \quad \| \tilde{e}\|^2_{1,E} \cle \frac{1}{k^2}.
\end{align*}
Using a linear transformation $F$ from $E$ to $E_1$ we set $e(x) := \tilde{e}(F^{-1}(x))\frac{x^2}{2}$ to see $e(0) = e(1) = e'(0) = 0$ and $e'(1)=1$, and  
\begin{align} \label{ext:divtheorem:twoestimates}
  \| e \|_{0,E_1}  \cle \frac{1}{k^3} \quad \text{and} \quad \| e \|_{1,E_1} \cle \frac{1}{k}.
\end{align}
Similar as in the proof of theorem \ref{ext:tangh2ext} we now use the real method of interpolation of spaces. As $u=0$ on $\partial E_1$ we have $u \in H^1_0(E_1)$, thus together with $H^{1/2}_{00}(E_1) = [L^2(E_1),H_0^1(E_1)]$ and the definition of the norm on an interpolated space we have with \eqref{ext:divtheorem:twoestimates}
\begin{align*}
 \| e\|_{1/2^*, E_1} \cle \sqrt{ \| e\|_{0, E_1}\| e\|_{1, E_1} }  \cle \frac{1}{k^2},
\end{align*}
so \eqref{ext::divtheorem:B} is proven. It remains the first statement. We start by defining an extension from the edge to the triangle by
\begin{align*}
  \psi(u)(x,y):= \int_0^1 a(s) u(x+sy) \intd s \quad \textrm{with} \quad a(s) = 4-6s,
\end{align*}
and so $u'(1) = \frac{\partial \psi}{\partial x}(1,0)$. Again using the techniques of step 1 of the proof of theorem  \ref{ext:tangh2ext} we easily get $\| \psi \|_{1,\Tref} \cle \| u \|_{1/2,E_1}$.
Next we define the mean value along the line $l_x := \{(x,y): 0 \le x \le 1, y \in [0,1-x] \}$
\begin{align*}
\overline{u}(x,y) := \frac{1}{1-x} \int_x^1 \psi(x,s) \intd s =  \int_0^1 \psi(x,(1-x)s) \intd s.
\end{align*}
Due to $\frac{\partial \overline{u}}{\partial y} = 0$, it follows with  $\frac{\partial \overline{u}}{\partial x} := \overline{u}'$,
\begin{align*}
| \overline{u} |_{1,\Tref}^2 = \int_0^1 \int_0^{1-x} \overline{u}'(x)^2 \intd y \intd x = \int_0^1 (1-x) \overline{u}'(x)^2 \intd x = | \overline{u} |_{1*,E_1}^2
\end{align*}
and so 
\begin{align*}
| \overline{u} |_{1*,E_1} = | \overline{u} |_{1,\Tref} \cle \| \psi \|_{1,\Tref} \cle \| u \|_{1/2, E_1}.
\end{align*}
Using \eqref{ext:divtheorem:approx} and $\tilde{e}'(1)=1 $ we furthermore show that 
$| \overline{u}'(1) | \cle k^2 | \overline{u} |_{1^*,E_1} \cle k^2 \| u \|_{1/2, E_1}$,
and as
%\begin{align*}
%\overline{u}'(x) = \int_0^1 \frac{\partial \psi}{\partial x}(x,(1-x)s) -  \frac{\partial \psi}{\partial y}(x,(1-x)s)s \intd s,
%\end{align*}
%we have 
\begin{align*}
\overline{u}'(1) =  u'(1) \overbrace{ \int_0^1a(s) \intd s}^{=1} - \frac{1}{2}  u'(1) \overbrace{\int_0^1a(s)s \intd s}^{=0}  = u'(1), 
\end{align*}
we finally get
\begin{align*}
  | u'(1) | \cle  k^2 \| u \|_{1/2, E_1} \cle  k^2 \| u \|_{1/2^*, E_1}
\end{align*}
\end{proof}
\subsection{Proof of theorem \ref{ext:h2ext}} \label{ext::proofmaintheorem}
\begin{proof}In the first step we use theorem \ref{ext:tangh2ext} to find a function $\exttang(\vu)$ with a proper tangential derivation, thus for the difference $\vu_c:= \vu - \nabla \exttang(\vu)$ we have $\vu_c \cdot \tau = 0$ on the boundary $\partial \Tref$. Now let $\vn_i$ be the normal vector on the edge $E_i$ and $u_{\vn_i}:= \vu_c \cdot \vn_i$, so the remaining error in the normal derivation after the first step. The idea is now to split this error in two parts to use theorem \ref{ext:normh2ext} and theorem \ref{ext:dividetheorem}. We start with the lower edge $E_1$ and define $u_{1}:= \vu_{c} \cdot ((x,y) - V_2) \in \pol^{k+1}(\Tref)$, where $V_2 = (0,1)$ is the vertex opposite of $E_1$. As $((x,y) - V_2) \approx \tau$ on the edges $E_2$ and $E_3$ we have $u_{1}|_{E_2} = u_{1}|_{E_3} = 0$. On the lower edge we have $((x,y) - V_2) = (x,-1)$ and as $\vu_c \cdot \tau = 0$, thus the first component of $\vu_c$ is equal to zero, we get $u_{1}|_{E_1} = u_{\vn_1} \in \pol^k(E_1)$. Using theorem \ref{ext:dividetheorem} we find two functions $e_0,e_1 \in \pol^k(E_1)$ with
  %thus $u_{1} \in \pol^k(\partial \Tref)$.
\begin{align*}
  e'_1(1) =1, e'_1(0) = e_1(0) = e_1(1) = 0 \quad \textrm{and} \quad   e'_0(0) =1, e'_0(1) = e_0(0) = e_0(1) = 0,
\end{align*}
where we mirrored the edge $E$ in theorem \ref{ext:dividetheorem} to find $e_0$. We are now able to split the error to define a {\it good} and a {\it bad} part on the edge $E_1$ by
\begin{align*}
  u_{\vn_1}^b := (u_{1}|_{E_1})'(1) e_1 + (u_{1}|_{E_1})'(0) e_0 \quad \textrm{and} \quad  u_{\vn_1}^g := u_{\vn_1} - u_{\vn_1}^b. % \quad \textrm{with} \quad u_{\vn_1}^b,u_{\vn_1}^g \in \pol^k(E_1).
\end{align*}
The second function $u_{\vn_1}^g$ is {\it good} in the sense of having a zero of order two in the vertices, so $u_{\vn_1}^g \in \pol_{00}^k(E_1)$,  thus we use theorem \ref{ext:normh2ext}. For the other two edges we proceed similarls (see remark \ref{remark::normalext}) to finally define
\begin{align*}
\ext(\vu):= \exttang(\vu) + \extnorm_{E_1}(u_{\vn_1}^g) + \extnorm_{E_2}(u_{\vn_2}^g) + \extnorm_{E_3}(u_{\vn_3}^g).
\end{align*}
Note that due to \eqref{ext:h2extnorm:propB} and \eqref{ext:h2extnorm:propD} the normal derivative of the different corrections do not interfere. As $\extnorm(u_{\vn_i}^g) = 0$ (see \eqref{ext:h2extnorm:propA}) on the boundary $\partial \Tref$ for $i=1,2,3$ the corresponding tangential derivation is also zero thus we have
\begin{align*}
\nabla \ext(\vu) \cdot \tau = \nabla  \exttang(\vu) \cdot \tau +  \nabla \extnorm_{E_1}(u_{\vn_1}^g)\cdot \tau +  \nabla \extnorm_{E_2}(u_{\vn_2}^g)\cdot \tau + \nabla  \extnorm_{E_3}(u_{\vn_3}^g)\cdot \tau = \nabla  \exttang(\vu) \cdot \tau = \vu \cdot \tau,
\end{align*}
so property \eqref{ext:h2ext:propA} is proven. For $\extnorm_{E_1}(u_{\vn_1}^g)$ we get using \eqref{ext:h2extnorm:propC}, \eqref{ext::divtheorem:B} and \eqref{ext::divtheorem:A} as $u_{1}|_{E_1}=0$ on $\partial E_1$
\begin{align*}
  \| \extnorm_{E_1}(u_{\vn_1}^g) \|_{2,\Tref} &\cle \| u_{\vn_1}^g \|_{1/2^*,E_1} = \| u_{\vn_1} - u_{\vn_1}^b \|_{1/2^*,E_1} \\
  &\cle \| u_{\vn_1} \|_{1/2^*,E_1} + |(u_{1}|_{E_1})'(1)| \| e_1\|_{1/2^*,E_1}  + |(u_{1}|_{E_1})'(0)| \| e_0\|_{1/2^*,E_1} \\
                                        &\cle  \| u_{\vn_1} \|_{1/2^*,E_1} + \|u_{1}\|_{1/2^*,E_1}  \frac{k^2}{k^2}  + \|u_{1}\|_{1/2^*,E_1}  \frac{k^2}{k^2} \cle \|u_{1}\|_{1/2^*,E_1}.
 % &\cle  \| u_{\vn_1} \|_{1/2^*,E_1} +  \|u_{1}\|_{1/2^*,E_1} \cle \|u_{1}\|_{1/2^*,E_1} \cle \| \nabla u_{1} \|_{0,\Tref}  \cle \| \vu_{c} \|_{1,\Tref}.
\end{align*}
As $u_{1}|_{E_2} = u_{1}|_{E_3} = 0$ we bound $\|u_{1}\|_{1/2^*,E_1}$ by the $H^1$-norm on the triangle, thus we get the estimate $\| \extnorm_{E_1}(u_{\vn_1}^g) \|_{2,\Tref} \cle \|  u_{1} \|_{1,\Tref}  \cle \| \vu_{c} \|_{1,\Tref}$. With the same arguments for the other two normal extensions and inequality \eqref{ext:h2exttang:propC} it follows property \eqref{ext:h2ext:propC},
\begin{align*}
  \| \ext(\vu) \|_{2,\Tref} \le \| \exttang(\vu) \|_{2,\Tref} + 3 \| \vu_{c} \|_{1,\Tref}  \cle \| \vu \|_{1,\Tref} + \| \vu - \nabla \exttang(\vu) \|_{1,\Tref} \cle \| \vu \|_{1,\Tref}. 
\end{align*}
To show \eqref{ext:h2ext:propB} first note that on the boundary $\partial \Tref$ we have
\begin{align*}
  \nabla \ext(\vu) \cdot \vn &= \nabla \exttang(\vu) \cdot \vn + \sum\limits_{i=1}^3 \nabla \extnorm_{E_i}(u_{\vn_i}^g) \cdot n =\nabla \exttang(\vu) \cdot \vn + \overbrace{\sum\limits_{i=1}^3 u_{\vn_i}}^{=\vu_c \cdot n} - \sum\limits_{i=1}^3 u_{\vn_i}^b\\
                             & = \nabla \exttang(\vu) \cdot \vn + \vu \cdot \vn -  \nabla \exttang(\vu) \cdot \vn  - \sum\limits_{i=1}^3 u_{\vn_i}^b=  \vu \cdot \vn  - \sum\limits_{i=1}^3 u_{\vn_i}^b,
\end{align*}
and as $u_{\vn_i}^b|_{E_j} = 0$ for $j \neq i$ it follows $ \| \left( \vu - \nabla \ext(\vu) \right) \cdot \vn \|_{0,\partial \Tref} \le \sum\limits_{i=1}^3 \|  u_{\vn_i}^b \|_{0,E_i} $. As before we use \eqref{ext::divtheorem:B} and \eqref{ext::divtheorem:A} to get
\begin{align*}
  \|  u_{\vn_1}^b \|_{0,E_1} &\cle  |(u_{1}|_{E_1})'(1)| \| e_1  \|_{0,E_1} + |(u_{1}|_{E_1})'(0)| \| e_0 \|_{0,E_1} \cle \frac{1}{k} \|u_1\|_{1/2^*,E_1}\\
                             &\cle \frac{1}{k} \|u_1\|_{1,\Tref} \cle \frac{1}{k} \|\vu_c\|_{1,\Tref} = \frac{1}{k} \|\vu - \nabla \exttang(\vu) \|_{1,\Tref} \cle \frac{1}{k}\|\vu\|_{1,\Tref},
\end{align*}
and with a similar estimate for $u_{\vn_2}^b$ and $u_{\vn_3}^b$ we finally get \eqref{ext:h2ext:propB}
\begin{align*}
\|  \left( \vu - \nabla \ext(\vu) \right) \cdot \vn \|_{0,\partial \Tref} \cle  \frac{1}{k}\|\vu\|_{1,\Tref}.
\end{align*}
\end{proof}
%%% Local Variables: 
%%% mode:latex
%%% TeX-master: "main"
%%% End:

\newpage
\bibliographystyle{IMANUM-BIB}
\bibliography{references}

\begin{thebibliography}{}

\bibitem[Abramowitz(1974)Abramowitz]{Abramowitz}
{\sc Abramowitz, M.} (1974)
\newblock {\em Handbook of Mathematical Functions, With Formulas, Graphs, and
  Mathematical Tables\/}.
\newblock Dover Publications, Incorporated.

\bibitem[Ainsworth \& Coggins(2000)Ainsworth \& Coggins]{MR1813253}
{\sc Ainsworth, M. \& Coggins, P.} (2000)
\newblock The stability of mixed {$hp$}-finite element methods for {S}tokes
  flow on high aspect ratio elements.
\newblock {\em SIAM J. Numer. Anal.}, {\bf 38}, 1721--1761 (electronic).

\bibitem[Ainsworth \& Coggins(2002)Ainsworth \& Coggins]{MR1897411}
{\sc Ainsworth, M. \& Coggins, P.} (2002)
\newblock A uniformly stable family of mixed {$hp$}-finite elements with
  continuous pressures for incompressible flow.
\newblock {\em IMA J. Numer. Anal.}, {\bf 22}, 307--327.

\bibitem[Ainsworth \& Demkowicz(2009)Ainsworth \& Demkowicz]{MR2523203}
{\sc Ainsworth, M. \& Demkowicz, L.} (2009)
\newblock Explicit polynomial preserving trace liftings on a triangle.
\newblock {\em Math. Nachr.}, {\bf 282}, 640--658.

\bibitem[Andrews {\em et~al.}(1999)Andrews, Askey, \& Roy]{andrews}
{\sc Andrews, G., Askey, R. \& Roy, R.} (1999)
\newblock {\em Special Functions\/}.
\newblock Encyclopedia of Mathematics and its Applications.
\newblock Cambridge University Press.

\bibitem[Arnold {\em et~al.}(0102)Arnold, Brezzi, Cockburn, \&
  Marini]{MR1885715}
{\sc Arnold, D.~N., Brezzi, F., Cockburn, B. \& Marini, L.~D.} (2001/02)
\newblock Unified analysis of discontinuous {G}alerkin methods for elliptic
  problems.
\newblock {\em SIAM J. Numer. Anal.}, {\bf 39}, 1749--1779.

\bibitem[{Babu{\v{s}}ka, I., Suri, Manil}(1987){Babu{\v{s}}ka, I., Suri,
  Manil}]{Babuska1987}
{\sc {Babu{\v{s}}ka, I., Suri, Manil}} (1987)
\newblock The $h-p$ version of the finite element method with quasiuniform
  meshes.
\newblock {\em ESAIM: Mathematical Modelling and Numerical Analysis -
  Modélisation Mathématique et Analyse Numérique\/}, {\bf 21}, 199--238.

\bibitem[Belgacem(1994)Belgacem]{BENBELGACEM1994235}
{\sc Belgacem, F.~B.} (1994)
\newblock Polynomial extensions of compatible polynomial traces in three
  dimensions.
\newblock {\em Computer Methods in Applied Mechanics and Engineering\/}, {\bf
  116}, 235 -- 241.

\bibitem[Bergh \& L{\"o}fstr{\"o}m(1976)Bergh \& L{\"o}fstr{\"o}m]{berghint}
{\sc Bergh, J. \& L{\"o}fstr{\"o}m, J.} (1976)
\newblock {\em Interpolation spaces: an introduction\/}.
\newblock Grundlehren der mathematischen Wissenschaften.
\newblock Springer.

\bibitem[Bernardi \& Maday(1990)Bernardi \& Maday]{MR1076961}
{\sc Bernardi, C. \& Maday, Y.} (1990)
\newblock Rel\`evement polynomial de traces et applications.
\newblock {\em RAIRO Mod\'el. Math. Anal. Num\'er.}, {\bf 24}, 557--611.

\bibitem[Bernardi \& Maday(1992)Bernardi \& Maday]{bernardi1992}
{\sc Bernardi, C. \& Maday, Y.} (1992)
\newblock {\em {Approximations spectrales de probl{\`e}mes aux limites
  elliptiques}\/}.
\newblock {Math{\'e}matiques et Applications}.
\newblock Springer Berlin Heidelberg.

\bibitem[Bernardi \& Maday(1997)Bernardi \& Maday]{Bernardi:1997:SMH}
{\sc Bernardi, C. \& Maday, Y.} (1997)
\newblock Spectral methods.
\newblock {\em Handbook of numerical analysis, Vol. V\/} (P.~G. Ciarlet \&
  J.~L. Lions eds).
\newblock North-Holland, pp. 209--485.

\bibitem[Bernardi \& Maday(1999)Bernardi \& Maday]{MR1686546}
{\sc Bernardi, C. \& Maday, Y.} (1999)
\newblock Uniform inf-sup conditions for the spectral discretization of the
  {S}tokes problem.
\newblock {\em Math. Models Methods Appl. Sci.}, {\bf 9}, 395--414.

\bibitem[Beuchler \& Sch{\"o}berl(2006)Beuchler \& Sch{\"o}berl]{MR2221053}
{\sc Beuchler, S. \& Sch{\"o}berl, J.} (2006)
\newblock New shape functions for triangular {$p$}-{FEM} using integrated
  {J}acobi polynomials.
\newblock {\em Numer. Math.}, {\bf 103}, 339--366.

\bibitem[Boffi {\em et~al.}(2013)Boffi, Fortin, \& Brezzi]{brezzifortin}
{\sc Boffi, D., Fortin, M. \& Brezzi, F.} (2013)
\newblock {\em Mixed finite element methods and applications\/}.
\newblock Springer series in computational mathematics.
\newblock Berlin, Heidelberg: Springer.

\bibitem[Brennecke {\em et~al.}(2015)Brennecke, Linke, Merdon, \&
  Sch{\"o}berl]{MR3326010}
{\sc Brennecke, C., Linke, A., Merdon, C. \& Sch{\"o}berl, J.} (2015)
\newblock Optimal and pressure-independent {$L^2$} velocity error estimates for
  a modified {C}rouzeix-{R}aviart {S}tokes element with {BDM} reconstructions.
\newblock {\em J. Comput. Math.}, {\bf 33}, 191--208.

\bibitem[Brenner {\em et~al.}(2010)Brenner, Gudi, \& Sung]{MR2670114}
{\sc Brenner, S.~C., Gudi, T. \& Sung, L.-y.} (2010)
\newblock An a posteriori error estimator for a quadratic {$C^0$}-interior
  penalty method for the biharmonic problem.
\newblock {\em IMA J. Numer. Anal.}, {\bf 30}, 777--798.

\bibitem[Brenner {\em et~al.}(2012)Brenner, Gu, Gudi, \& Sung]{MR3022211}
{\sc Brenner, S.~C., Gu, S., Gudi, T. \& Sung, L.-y.} (2012)
\newblock A quadratic {$C^{\circ}$} interior penalty method for linear fourth
  order boundary value problems with boundary conditions of the
  {C}ahn-{H}illiard type.
\newblock {\em SIAM J. Numer. Anal.}, {\bf 50}, 2088--2110.

\bibitem[Cockburn {\em et~al.}(2002)Cockburn, Kanschat, Sch\"{o}tzau, \&
  Schwab]{Cockburn:2002}
{\sc Cockburn, B., Kanschat, G., Sch\"{o}tzau, D. \& Schwab, C.} (2002)
\newblock {Local Discontinuous Galerkin Methods for the Stokes System}.
\newblock {\em SIAM J. Numer. Anal.}, {\bf 40}, 319--343.

\bibitem[Cockburn {\em et~al.}(2004)Cockburn, Kanschat, \&
  Sch{\"o}tzau]{MR2031395}
{\sc Cockburn, B., Kanschat, G. \& Sch{\"o}tzau, D.} (2004)
\newblock The local discontinuous {G}alerkin method for the {O}seen equations.
\newblock {\em Math. Comp.}, {\bf 73}, 569--593 (electronic).

\bibitem[Cockburn {\em et~al.}(2005)Cockburn, Kanschat, \& Schotzau]{MR2136994}
{\sc Cockburn, B., Kanschat, G. \& Schotzau, D.} (2005)
\newblock A locally conservative {LDG} method for the incompressible
  {N}avier-{S}tokes equations.
\newblock {\em Math. Comp.}, {\bf 74}, 1067--1095 (electronic).

\bibitem[Cockburn {\em et~al.}(2007)Cockburn, Kanschat, \&
  Sch{\"o}tzau]{Cockburn2007}
{\sc Cockburn, B., Kanschat, G. \& Sch{\"o}tzau, D.} (2007)
\newblock {A Note on Discontinuous Galerkin Divergence-free Solutions of the
  Navier--Stokes Equations}.
\newblock {\em Journal of Scientific Computing\/}, {\bf 31}, 61--73.

\bibitem[Cockburn {\em et~al.}(2010)Cockburn, Nguyen, \& Peraire]{MR2679797}
{\sc Cockburn, B., Nguyen, N.~C. \& Peraire, J.} (2010)
\newblock A comparison of {HDG} methods for {S}tokes flow.
\newblock {\em J. Sci. Comput.}, {\bf 45}, 215--237.

\bibitem[Cockburn {\em et~al.}(2011)Cockburn, Gopalakrishnan, Nguyen, Peraire,
  \& Sayas]{MR2772094}
{\sc Cockburn, B., Gopalakrishnan, J., Nguyen, N.~C., Peraire, J. \& Sayas,
  F.-J.} (2011)
\newblock Analysis of {HDG} methods for {S}tokes flow.
\newblock {\em Math. Comp.}, {\bf 80}, 723--760.

\bibitem[Costabel \& McIntosh(2010)Costabel \& McIntosh]{MR2609313}
{\sc Costabel, M. \& McIntosh, A.} (2010)
\newblock On {B}ogovski\u\i\ and regularized {P}oincar\'e integral operators
  for de {R}ham complexes on {L}ipschitz domains.
\newblock {\em Math. Z.}, {\bf 265}, 297--320.

\bibitem[Demkowicz {\em et~al.}(2008)Demkowicz, Gopalakrishnan, \&
  Sch{\"o}berl]{MR2439500}
{\sc Demkowicz, L., Gopalakrishnan, J. \& Sch{\"o}berl, J.} (2008)
\newblock Polynomial extension operators. {I}.
\newblock {\em SIAM J. Numer. Anal.}, {\bf 46}, 3006--3031.

\bibitem[Demkowicz {\em et~al.}(2009)Demkowicz, Gopalakrishnan, \&
  Sch{\"o}berl]{MR2551195}
{\sc Demkowicz, L., Gopalakrishnan, J. \& Sch{\"o}berl, J.} (2009)
\newblock Polynomial extension operators. {II}.
\newblock {\em SIAM J. Numer. Anal.}, {\bf 47}, 3293--3324.

\bibitem[Demkowicz {\em et~al.}(2012)Demkowicz, Gopalakrishnan, \&
  Sch{\"o}berl]{MR2904580}
{\sc Demkowicz, L., Gopalakrishnan, J. \& Sch{\"o}berl, J.} (2012)
\newblock Polynomial extension operators. {P}art {III}.
\newblock {\em Math. Comp.}, {\bf 81}, 1289--1326.

\bibitem[Demkowicz \& Babu{\v{s}}ka(2003)Demkowicz \& Babu{\v{s}}ka]{MR2034876}
{\sc Demkowicz, L. \& Babu{\v{s}}ka, I.} (2003)
\newblock {$p$} interpolation error estimates for edge finite elements of
  variable order in two dimensions.
\newblock {\em SIAM J. Numer. Anal.}, {\bf 41}, 1195--1208.

\bibitem[Donea \& Huerta(2003)Donea \& Huerta]{huerta}
{\sc Donea, J. \& Huerta, D.} (2003)
\newblock {\em {Finite element methods for flow problems}\/}.
\newblock Hoboken, New Jersey: John Wiley \& Sons.

\bibitem[Egger \& Sch{\"o}berl(2010)Egger \& Sch{\"o}berl]{MR2727822}
{\sc Egger, H. \& Sch{\"o}berl, J.} (2010)
\newblock A hybrid mixed discontinuous {G}alerkin finite-element method for
  convection-diffusion problems.
\newblock {\em IMA J. Numer. Anal.}, {\bf 30}, 1206--1234.

\bibitem[Egger \& Waluga(2013)Egger \& Waluga]{MR3047948}
{\sc Egger, H. \& Waluga, C.} (2013)
\newblock {$hp$} analysis of a hybrid {DG} method for {S}tokes flow.
\newblock {\em IMA J. Numer. Anal.}, {\bf 33}, 687--721.

\bibitem[Elman {\em et~al.}(2005)Elman, Silvester, \& Wathen]{opac-b1119398}
{\sc Elman, H.~C., Silvester, D.~J. \& Wathen, A.~J.} (2005)
\newblock {\em Finite elements and fast iterative solvers : with applications
  in incompressible fluid dynamics\/}.
\newblock Numerical mathematics and scientific computation.
\newblock Oxford, New York: Oxford University Press.

\bibitem[F.~Brezzi(1991)F.~Brezzi]{brezzi:falk}
{\sc F.~Brezzi, R. S.~F.} (1991)
\newblock Stability of higher-order {H}ood-{T}aylor method.
\newblock {\em SIAM J. Numer. Anal.}, {\bf 28}.

\bibitem[{Fu} {\em et~al.}(2016){Fu}, {Jin}, \& {Qiu}]{guosheng}
{\sc {Fu}, G., {Jin}, Y. \& {Qiu}, W.} (2016)
\newblock {Parameter-free superconvergent $H(\mathrm{div})$-conforming HDG
  methods for the Brinkman equations}.
\newblock {\em ArXiv e-prints\/}.

\bibitem[Girault {\em et~al.}(2005)Girault, Rivi{\`e}re, \& Wheeler]{MR2085402}
{\sc Girault, V., Rivi{\`e}re, B. \& Wheeler, M.~F.} (2005)
\newblock A discontinuous {G}alerkin method with nonoverlapping domain
  decomposition for the {S}tokes and {N}avier-{S}tokes problems.
\newblock {\em Math. Comp.}, {\bf 74}, 53--84 (electronic).

\bibitem[Girault \& Raviart(1986)Girault \& Raviart]{opac-b1081719}
{\sc Girault, V. \& Raviart, P.-A.} (1986)
\newblock {\em Finite element methods for Navier-Stokes equations : theory and
  algorithms\/}.
\newblock Springer series in computational mathematics.
\newblock Berlin, New York: Springer-Verlag.
\newblock Extended version of : Finite element approximation of the
  Navier-Stokes equations.

\bibitem[Glowinski(2003)Glowinski]{glowinski}
{\sc Glowinski, R.} (2003)
\newblock {\em Finite element methods for incompressible viscous flow\/}.
\newblock Handbook of Numerical Analysis.
\newblock Elsevier, pp. 3--1176.

\bibitem[Grisvard(1985)Grisvard]{grisvard1985elliptic}
{\sc Grisvard, P.} (1985)
\newblock {\em Elliptic Problems in Nonsmooth Domains\/}.
\newblock Classics in Applied Mathematics.
\newblock Society for Industrial and Applied Mathematics.

\bibitem[Houston {\em et~al.}(2002)Houston, Schwab, \& S{\"u}li]{MR1897953}
{\sc Houston, P., Schwab, C. \& S{\"u}li, E.} (2002)
\newblock Discontinuous {$hp$}-finite element methods for
  advection-diffusion-reaction problems.
\newblock {\em SIAM J. Numer. Anal.}, {\bf 39}, 2133--2163.

\bibitem[Karniadakis \& Sherwin(2005)Karniadakis \& Sherwin]{opac-b1125338}
{\sc Karniadakis, G.~E. \& Sherwin, S.~J.} (2005)
\newblock {\em Spectral/hp element methods for computational fluid dynamics\/}.
\newblock Numerical mathematics and scientific computation.
\newblock Oxford, New York, Aukland: Oxford University Press.

\bibitem[Lederer(2016)Lederer]{lederer:2016}
{\sc Lederer, P.} (2016)
\newblock {\em Pressure-Robust Discretizations for {N}avier--{S}tokes
  Equations: Divergence-free Reconstruction for {T}aylor--{H}ood Elements and
  High Order {H}ybrid {D}iscontinuous {G}alerkin Methods\/}.
\newblock {\em Master's thesis}, Vienna Technical University.

\bibitem[Lehrenfeld(2010)Lehrenfeld]{Lehrenfeld:thesis}
{\sc Lehrenfeld, C.} (2010)
\newblock {\em Hybrid discontinuous Galerkin methods for solving incompressible
  flow problems\/}.
\newblock {\em Master's thesis}, Rheinisch Westfalischen Technischen Hochschule
  Aachen.

\bibitem[Lehrenfeld \& Sch{\"o}berl(2016)Lehrenfeld \& Sch{\"o}berl]{MR3511719}
{\sc Lehrenfeld, C. \& Sch{\"o}berl, J.} (2016)
\newblock High order exactly divergence-free {H}ybrid {D}iscontinuous
  {G}alerkin {M}ethods for unsteady incompressible flows.
\newblock {\em Comput. Methods Appl. Mech. Engrg.}, {\bf 307}, 339--361.

\bibitem[Linke(2014)Linke]{MR3133522}
{\sc Linke, A.} (2014)
\newblock On the role of the {H}elmholtz decomposition in mixed methods for
  incompressible flows and a new variational crime.
\newblock {\em Comput. Methods Appl. Mech. Engrg.}, {\bf 268}, 782--800.

\bibitem[Linke {\em et~al.}(2016)Linke, Matthies, \& Tobiska]{MR3460110}
{\sc Linke, A., Matthies, G. \& Tobiska, L.} (2016)
\newblock Robust arbitrary order mixed finite element methods for the
  incompressible {S}tokes equations with pressure independent velocity errors.
\newblock {\em ESAIM Math. Model. Numer. Anal.}, {\bf 50}, 289--309.

\bibitem[Linke \& Merdon(2016)Linke \& Merdon]{MR3481034}
{\sc Linke, A. \& Merdon, C.} (2016)
\newblock On velocity errors due to irrotational forces in the
  {N}avier-{S}tokes momentum balance.
\newblock {\em J. Comput. Phys.}, {\bf 313}, 654--661.

\bibitem[Maday(1989)Maday]{MR1055459}
{\sc Maday, Y.} (1989)
\newblock Rel\`evements de traces polynomiales et interpolations hilbertiennes
  entre espaces de polyn\^omes.
\newblock {\em C. R. Acad. Sci. Paris S\'er. I Math.}, {\bf 309}, 463--468.

\bibitem[Melenk \& Wurzer(2014)Melenk \& Wurzer]{MR3163888}
{\sc Melenk, J.~M. \& Wurzer, T.} (2014)
\newblock On the stability of the boundary trace of the polynomial
  {$L^2$}-projection on triangles and tetrahedra.
\newblock {\em Comput. Math. Appl.}, {\bf 67}, 944--965.

\bibitem[Mu{\~n}oz-Sola(1997)Mu{\~n}oz-Sola]{MR1445738}
{\sc Mu{\~n}oz-Sola, R.} (1997)
\newblock Polynomial liftings on a tetrahedron and applications to the
  {$h$}-{$p$} version of the finite element method in three dimensions.
\newblock {\em SIAM J. Numer. Anal.}, {\bf 34}, 282--314.

\bibitem[Nguyen {\em et~al.}(2010)Nguyen, Peraire, \& Cockburn]{MR2796169}
{\sc Nguyen, N.~C., Peraire, J. \& Cockburn, B.} (2010)
\newblock A hybridizable discontinuous {G}alerkin method for {S}tokes flow.
\newblock {\em Comput. Methods Appl. Mech. Engrg.}, {\bf 199}, 582--597.

\bibitem[Nguyen {\em et~al.}(2011)Nguyen, Peraire, \& Cockburn]{MR2753354}
{\sc Nguyen, N.~C., Peraire, J. \& Cockburn, B.} (2011)
\newblock An implicit high-order hybridizable discontinuous {G}alerkin method
  for the incompressible {N}avier-{S}tokes equations.
\newblock {\em J. Comput. Phys.}, {\bf 230}, 1147--1170.

\bibitem[Peetre(1963)Peetre]{MR0178381}
{\sc Peetre, J.} (1963)
\newblock Nouvelles propri\'et\'es d'espaces d'interpolation.
\newblock {\em C. R. Acad. Sci. Paris\/}, {\bf 256}, 1424--1426.

\bibitem[Rivi{\`e}re(2008)Rivi{\`e}re]{MR2431403}
{\sc Rivi{\`e}re, B.} (2008)
\newblock {\em Discontinuous {G}alerkin methods for solving elliptic and
  parabolic equations\/}. Frontiers in Applied Mathematics,  vol.~35.
\newblock Society for Industrial and Applied Mathematics (SIAM), Philadelphia,
  PA, pp. xxii+190.
\newblock Theory and implementation.

\bibitem[Sch{\"o}berl {\em et~al.}(2008)Sch{\"o}berl, Melenk, Pechstein, \&
  Zaglmayr]{MR2387903}
{\sc Sch{\"o}berl, J., Melenk, J.~M., Pechstein, C. \& Zaglmayr, S.} (2008)
\newblock Additive {S}chwarz preconditioning for {$p$}-version triangular and
  tetrahedral finite elements.
\newblock {\em IMA J. Numer. Anal.}, {\bf 28}, 1--24.

\bibitem[Sch{\"o}tzau {\em et~al.}(2002)Sch{\"o}tzau, Schwab, \&
  Toselli]{MR1974180}
{\sc Sch{\"o}tzau, D., Schwab, C. \& Toselli, A.} (2002)
\newblock Mixed {$hp$}-{DGFEM} for incompressible flows.
\newblock {\em SIAM J. Numer. Anal.}, {\bf 40}, 2171--2194 (electronic) (2003).

\bibitem[Schwab(1998)Schwab]{opac-b1127030}
{\sc Schwab, C.~C.} (1998)
\newblock {\em {$p$- and $hp$- Finite Element Methods : Theory and Applications
  in Solid and Fluid Mechanics}\/}.
\newblock Numerical mathematics and scientific computation.
\newblock Oxford: Clarendon Press New York.

\bibitem[Stamm \& Wihler(2010)Stamm \& Wihler]{MR2684358}
{\sc Stamm, B. \& Wihler, T.~P.} (2010)
\newblock {$hp$}-optimal discontinuous {G}alerkin methods for linear elliptic
  problems.
\newblock {\em Math. Comp.}, {\bf 79}, 2117--2133.

\bibitem[Stenberg \& Suri(1996)Stenberg \& Suri]{Stenberg1996}
{\sc Stenberg, R. \& Suri, M.} (1996)
\newblock {Mixed $hp$ finiteelement methods for problems in elasticity and
  Stokes flow}.
\newblock {\em Numerische Mathematik\/}, {\bf 72}, 367--389.

\bibitem[Su {\em et~al.}(2016)Su, Chen, Li, \& Xu]{MR3518368}
{\sc Su, Y., Chen, L., Li, X. \& Xu, C.} (2016)
\newblock On the inf-sup constant of a triangular spectral method for the
  {S}tokes equations.
\newblock {\em Comput. Methods Appl. Math.}, {\bf 16}, 507--522.

\bibitem[Toselli(2002)Toselli]{MR1938957}
{\sc Toselli, A.} (2002)
\newblock {$hp$} discontinuous {G}alerkin approximations for the {S}tokes
  problem.
\newblock {\em Math. Models Methods Appl. Sci.}, {\bf 12}, 1565--1597.

\bibitem[Zhang(2009)Zhang]{MR2519595}
{\sc Zhang, S.} (2009)
\newblock A family of {$Q_{k+1,k}\times Q_{k,k+1}$} divergence-free finite
  elements on rectangular grids.
\newblock {\em SIAM J. Numer. Anal.}, {\bf 47}, 2090--2107.

\end{thebibliography}
\clearpage

\end{document}